\newtheorem{theorem}{Theorem}
\newtheorem{lemma}{Lemma}
\newdefinition{example}{Example}
\newdefinition{open}{Open question}
\newdefinition{definition}{Definition}
\journal{arXiv}
\begin{document}

\begin{frontmatter}

\title{The sequence reconstruction problem for permutations with the Hamming distance}

\author[1]{Xiang Wang}
\author[2,3,4]{Elena~V.~Konstantinova$^*$}

\address[1]{Faculty of Science, Beijing University of Technology, Beijing, 100124, China, \\E-mail address: xwang@bjut.edu.cn}
\address[2]{Sobolev Institute of Mathematics, Ak. Koptyug av. 4, Novosibirsk 630090, Russia}
\address[3]{Novosibirsk State University, Pirogova str. 2, Novosibirsk, 630090, Russia}
\address[4] {Three Gorges Mathematical Research Center, China Three Gorges University, 8 University Avenue, Yichang 443002, Hubei Province, China \\E-mail address: e\_konsta@math.nsc.ru}

\cortext[cor2]{Corresponding author: Elena~V.~Konstantinova}
\begin{abstract}
V.~Levenshtein first proposed the sequence reconstruction problem in 2001. This problem studies the model where the same sequence from some set is transmitted over multiple channels, and the decoder receives the different outputs. Assume that the transmitted sequence is at distance $d$ from some code and there are at most $r$ errors in every channel. Then the sequence reconstruction problem is to find the minimum number of channels required to recover exactly the transmitted sequence that has to be greater than the maximum intersection between two metric balls of radius $r$, where the distance between their centers is at least $d$. In this paper, we study the sequence reconstruction problem of permutations under the Hamming distance. In this model we define a Cayley graph over the symmetric group, study its properties and find the exact value of the largest intersection of its two metric balls for $d=2r$. Moreover, we give a lower bound on the largest intersection of two metric balls for $d=2r-1$.
\end{abstract}

\begin{keyword}
Sequence reconstruction; Permutation codes; Hamming distance; Cayley graph

\vskip 3mm
\noindent
{\small {\bf Mathematics Subject Classification (2000)} \ 68P30, 05C25, 94A15}
\end{keyword}

\end{frontmatter}

\section{Introduction}
\label{intro}

The sequence reconstruction problem was proposed in coding theory by V.~Levenshtein~\cite{L1} in $2001$ as a local reconstruction of sequences in the model where the same sequence is transmitted over multiple channels, and the decoder receives all the distinct outputs. This problem is stated as follows. 

Let $S$ be a set of all sequences of length $n$, $\rho$ be a metric in $S$, and $B_r(x)=\{y\in S|\rho(x,y)\leqslant r\}$ be a metric ball of radius $r$ centered at $x\in S$. For any integer $d\geqslant 1$, the minimum number of transmission channels has to be greater than the maximum intersection of two metric balls centered at elements of $S$ denoted as follows:
\begin{equation}
N(n,d,r)=\max\limits_{x_1,x_2\in S, \rho(x_1,x_2)\geqslant  d}|B_r(x_1)\cap B_r(x_2)|.\label{eq1}
\end{equation}
The problem of determining $N(n,d,r)$ is the \emph{sequence reconstruction problem}.

The model above is presented by an error graph where vertices correspond to sequences and edges connect vertices under error transmissions. From a graph-theoretical point of view, this is the problem of reconstructing a vertex by its neighbours being at a given distance from the vertex~\cite{K4}. In~\cite{L1,L2} this problem was completely solved for the Hamming graph and the Johnson graph. The main advantage in getting exact values of the minimum number of transmission channels required to recover the transmitted sequence exactly is that both graphs are distance-regular. This structural property allows to apply general approaches to solve the problem for any vertex in a graph.

The situation is completely changed whenever a graph is not a distance-regular. In~\cite{K4} the main difficulties in solving this problem for Cayley graphs over the symmetric group and the hyperoctahedral group that are not distance-regular graphs are discussed. In particular, this problem was studied in~\cite{K1,K3} over permutations with reversal errors and was completely solved for $r=1$. However, even for $r=2$ there are only lower bounds on~(\ref{eq1}) obtained. To get these bounds, all possible cases of neighbourhoods of a vertex are considered. The same difficulties are appeared in~\cite{K2,L4} when the reconstruction problem is solved for the transposition graph. To get a complete solution, the reconstruction of elements in the symmetric group is considered with paying attention to conjugacy classes to which group elements belong~\cite{L4}.

A particular case of transposition errors, namely, transpositions of two adjacent elements of a permutation lead to a Cayley graph over the symmetric group known as the bubble-sort graph~(see~\cite{K4} for details). The distance in this graph is called the bubble-sort distance in computer science~\cite{Knu94} or  Kendall’s $\tau$-metric in statistics~\cite{KG90}. Despite this graph is an induced subgraph of the transposition graph, the complete solution of the sequence reconstruction problem of permutations distorted by these errors is still unknown. Since the bubble-sort graph is not distance-transitive this cause difficulties in finding~(\ref{eq1}) in a general case. Some particular cases were solved in~\cite{Yaakobi,Wang1}. 

In this paper, we study the sequence reconstruction problem of permutations by the single Hamming errors. First, we present some properties of $|B_r(\pi)\cap B_r(\tau)|$ over the symmetric group with the Hamming distance for any permutations $\pi$ and $\tau$. Then we define the Cayley graph $\mathrm{Sym}_n(H), n\geqslant 2,$ over the symmetric group $\mathrm{Sym}_n$ generated by cycles of length at least two and show that the distance between two permutations in this graph is the Hamming distance. Since the graph  $\mathrm{Sym}_n(H)$ is a vertex-transitive graph, we study $N(n,d,r)=\max_{\pi\in\mathrm{Sym}_n\\\pi\neq I_n}|B_r(I_n)\cap B_r(\pi)|,$ where $I_n$ is the identity permutation, the distance between $\pi$ and $I_n$ is at least $d$, and $n\geqslant r\geqslant \frac{d}{2}$. 

The rest of this paper is organised as follows. In Section~\ref{sec2}, we formally give definitions and notation with respect to the sequence reconstruction problem over permutations under the Hamming distance. In Section~\ref{sec3}, the properties of the Cayley graph $\mathrm{Sym}_n(H), n\geqslant 2,$ are studied. In particular, it is shown that this graph is not distance regular. In Sections~\ref{sec4} and~\ref{sec5} we present the key technical lemma and obtain $N(n,d,r)$ for $d=2r$.  In Section~\ref{sec6} we give the lower bound on the value of $N(n,d,r)$
for $d=2r-1$.

\section{Preliminaries}
\label{sec2}

Let $\mathrm{Sym}_n, n\geqslant 2,$ be the symmetric group of permutations $\pi=[\pi_1 \pi_2 \ldots \pi_n]$ written as strings in one-line notation, where $\pi_i=\pi(i)$ for any $1\leqslant i \leqslant n$, with the identity element $I_n=[1\,2\,\ldots\,n]$. It is well-known fact that any permutation can be expressed as a product of disjoint cycles. For any $\pi\in \mathrm{Sym}_n$, let $disc(\pi) = [1^{h_1} 2^{h_2}\ldots n^{h_n}]$ be the cycle type of $\pi$, where $h_i$ is the number of cycles of length $i$. We omit components with $h_i=0$ in $disc(\pi)$. For example, the cycle type of $\pi=(1\,2)(3\,4\,5)(6\,7\,8)\in\mathrm{Sym}_8$ is written as $disc(\pi)=[2^13^2]$.

For any two permutations $\pi$ and $\tau$, the Hamming distance between them is the number of positions in which these permutations differ:
\begin{equation}
d(\pi,\tau)=|\{i\in[n]|\pi_i\neq \tau_i\}|,\label{eq2}
\end{equation}
where $[n]=\{1,2,...,n-1,n\}$.

Let $B_r(\pi)=\{\tau\in \mathrm{Sym}_n|d(\pi,\tau)\leqslant r\}$ and $S_r(\pi)=\{\tau\in \mathrm{Sym}_n|d(\pi,\tau)=r\}$ be a metric ball and a metric sphere of radius $r$ centered at a permutation $\pi$. The sizes of $B_r(\pi)$ and $S_r(\pi)$ do not depend on a permutation $\pi$ under the Hamming distance~\cite{CKLL97}. For convenience, we put $B_r(n)=|B_r(\pi)|$ and $S_r(n)=|S_r(\pi)|$ for any $\pi\in \mathrm{Sym}_n$.

Now we give some useful definitions and notations on enumerative combinatorics following by R.~Stanley~\cite{Stanley11}. A {\it derangement} of order $r$ is a permutation $\pi$ with no fixed points, i.e., $\pi_i\neq i$ for any $i\in [r]$. The number $D_r$ of distinct derangements on $r$ elements is given by the following formula:
\begin{equation}
D_r=r! \sum_{i=0}^r \frac{(-1)^i}{i!}. \nonumber 
\end{equation}
Then the size of $S_r(n)$ is given as follows:
\begin{equation}
S_r(n)=\binom{n}{r}D_r,\nonumber
\end{equation}
and the size of $B_r(n)$ is presented as follows:
\begin{equation}
B_r(n)=1+\sum\limits_{i=1}^{r}S_i(n)=1+\sum\limits_{i=1}^{r}\binom{n}{i}D_i.\nonumber
\end{equation}

For two integers $d$ and $r$, let $I(n,d,r)$ be the size of the maximum intersection of two metric balls of radius $r$ and at distance $d$ between their centers $\pi,\tau\in \mathrm{Sym}_n$ such that:
\begin{equation}
I(n,d,r)=\max\limits_{\pi,\tau\in \mathrm{Sym}_n, d(\pi,\tau)=d}|B_r(\pi)\cap B_r(\tau)|.\nonumber
\end{equation}
The formula~(\ref{eq1}) can be rewritten in terms of permutations as follows:
\begin{equation}
N(n,d,r)=\max\limits_{\pi,\tau\in \mathrm{Sym}_n, d(\pi,\tau)\geqslant d}|B_r(\pi)\cap B_r(\tau)|=\max\limits_{k\geqslant d}I(n,k,r).\label{eq3}
\end{equation}

We say that $C\subset \mathrm{Sym}_n$ is a $(n,d)$-permutation code under the Hamming distance for $1\leqslant d\leqslant n$, if $d(\pi,\tau)\geqslant d$ for any distinct permutations $\pi,\tau\in C$. Assume a permutation $\pi\in C$ is transmitted over $N$ channels, where $C$ is an $(n,d)$-permutation code, and there are at most $r$ errors on each channel such that all channel outputs differ from each other. V.~Levenshtein~\cite{L1} proved that the minimum number of channels that guarantees the decoder decodes successfully any transmitted codeword from $C$ is given by $N(n,d,r)+1$.

\section{The Cayley graph over the symmetric group with the Hamming distance}
\label{sec3}
In this section, we define a Cayley graph over the symmetric group of permutations with the Hamming distance and give some properties of sequence reconstruction in this Cayley graph.

Let $\mathrm{Sym}_n(H), n\geqslant 2,$ be the Cayley graph over the symmetric group $\mathrm{Sym}_n$ generated by cycles of length at least two from the following set:
\begin{equation}
H=\{\gamma\in\mathrm{Sym}_n|\, disc(\gamma)=[1^{n-i}i^1], i\in[n]\backslash\{1\}\}.\label{eq4}
\end{equation}
If $\gamma\in H$ with $disc(\gamma)=[1^{n-i}i^1]$ then it is obvious that $disc(\gamma^{-1})=[1^{n-i}i^1]$, and hence $H=H^{-1}$. For any $\pi\in \mathrm{Sym}_n$ and any $\gamma\in H$ with $disc(\gamma)=[1^{n-i}i^1]$ and $disc(\gamma^{-1})=[1^{n-i}i^1]$, the weight of an edge $\{\pi,\pi\,\gamma\}$ in $\mathrm{Sym}_n(H)$ is defined as $i$, where $\pi\,\gamma(j)=\gamma(\pi(j))$ for any $j\in[n]$. The distance $d(\pi,\tau)$ between two permutations $\pi$ and $\tau$ in this graph is defined as the length of the shortest path between $\pi$ and $\tau$, that is the least sum of lengths of disjoint cycles transforming $\pi$ into $\tau$. By~(\ref{eq2}), we have the following statement.

\begin{lemma}
The distance between two permutations in $\mathrm{Sym}_n(H), n\geqslant 2,$ is the Hamming distance.
\label{lm1}
\end{lemma}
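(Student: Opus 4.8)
The plan is to prove the two inequalities separately: that the length of a shortest weighted path from $\pi$ to $\tau$ in $\mathrm{Sym}_n(H)$ is at most the Hamming distance $d(\pi,\tau)$ from~(\ref{eq2}), and that it is at least $d(\pi,\tau)$. Throughout I would work with the permutation $\sigma=\pi^{-1}\tau$, the unique element satisfying $\pi\,\sigma=\tau$ under the composition rule $\pi\,\gamma(j)=\gamma(\pi(j))$. The first fact I would record is a description of the support of $\sigma$: since $\sigma(\pi(j))=\tau(j)$, the value $\pi(j)$ is fixed by $\sigma$ exactly when $\pi(j)=\tau(j)$, so the non-fixed points of $\sigma$ are precisely the values $\{\pi(j):\pi(j)\neq\tau(j)\}$, of which there are exactly $d(\pi,\tau)$.

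For the upper bound I would decompose $\sigma$ into its disjoint nontrivial cycles $\sigma=\gamma_1\gamma_2\cdots\gamma_m$, where each $\gamma_k$ has length at least $2$ and hence $disc(\gamma_k)=[1^{n-i_k}i_k^{1}]$ lies in $H$. Because the cycles are disjoint, their supports partition the set of non-fixed points of $\sigma$, so $\sum_{k=1}^{m}i_k$ equals the number of non-fixed points, which is $d(\pi,\tau)$ by the observation above. The sequence $\pi,\ \pi\gamma_1,\ \pi\gamma_1\gamma_2,\ \ldots,\ \pi\gamma_1\cdots\gamma_m=\tau$ is then a walk in $\mathrm{Sym}_n(H)$ whose total edge weight is $\sum_{k}i_k=d(\pi,\tau)$, so the graph distance is at most $d(\pi,\tau)$.

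For the lower bound I would first check that a single edge changes exactly as many positions as its weight: for any $\rho\in\mathrm{Sym}_n$ and any $\gamma\in H$ with $disc(\gamma)=[1^{n-i}i^{1}]$, one has $\rho\,\gamma(j)\neq\rho(j)$ if and only if $\rho(j)$ belongs to the $i$-element support of $\gamma$, whence the Hamming distance between the endpoints of that edge equals $i$. Applying the triangle inequality for the Hamming metric along an arbitrary path shows that $d(\pi,\tau)$ is bounded above by the sum of the weights of its edges, and minimising over all paths gives that $d(\pi,\tau)$ is at most the graph distance. Combining this with the upper bound yields the claimed equality.

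I do not expect a serious obstacle here: the only point requiring care is the bookkeeping forced by the right-multiplication convention $\pi\,\gamma(j)=\gamma(\pi(j))$, namely confirming that the support of $\sigma=\pi^{-1}\tau$ has size exactly $d(\pi,\tau)$ and that a weight-$i$ generator moves exactly $i$ positions. Once these are in place, the upper bound follows because the disjoint cycles of $\sigma$ realise all the disagreements block by block, and the lower bound follows because the triangle inequality for the Hamming metric forbids doing better.
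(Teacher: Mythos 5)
Your proof is correct and takes essentially the same approach as the paper's (much terser) argument: the disjoint cycle decomposition of $\sigma=\pi^{-1}\tau$ realises the Hamming distance as the total weight of a path, and each generator edge of weight $i$ changes exactly $i$ positions. In fact your write-up is more complete than the paper's one-line proof, since the triangle-inequality step justifies why paths built from non-disjoint generators cannot beat the disjoint-cycle path --- a point the paper simply asserts when it identifies the graph distance with ``the least sum of lengths of disjoint cycles transforming $\pi$ into $\tau$.''
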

\begin{proof}
For any two permutations $\pi,\tau$ of the graph $\mathrm{Sym}_n(H)$, the distance between $\pi$ and $\tau$ is the least sum of lengths of disjoint cycles transforming $\pi$ into $\tau$, where each cycle contributes a value with the length of the cycle to the Hamming distance between $\pi$ and $\tau$. Therefore, the lemma follows.
\end{proof}

The number of permutations of a given cycle type is as follows.
\begin{lemma}\cite[Proposition 1.3.2]{Stanley11}
\label{lm2}
The number of permutations $\pi\in\mathrm{Sym}_n$ of cycle type $[1^{n-i}i^1]$ is equal to $\frac{n!}{(n-i)!\cdot i}$.
\end{lemma}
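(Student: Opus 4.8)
The plan is to count permutations of cycle type $[1^{n-i}i^1]$ directly by constructing them. Such a permutation consists of exactly one nontrivial cycle of length $i$, together with $n-i$ fixed points. To build one, I would first choose which $i$ elements of $[n]$ are moved by the cycle, and then count the distinct ways those $i$ chosen elements can be arranged into a single $i$-cycle.

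First I would choose the $i$-element support of the cycle from the ground set $[n]$, which can be done in $\binom{n}{i}$ ways; the remaining $n-i$ elements are then forced to be fixed points, contributing no further choices. Second, I would count the cyclic arrangements of the chosen $i$ elements. The key combinatorial fact here is that the number of distinct $i$-cycles on a fixed set of $i$ elements is $(i-1)!$, not $i!$, because a cycle $(a_1\,a_2\,\ldots\,a_i)$ is invariant under cyclic rotation of its entries, so each cyclic order is represented by exactly $i$ of the $i!$ linear orderings. Multiplying these two counts gives
\begin{equation}
\binom{n}{i}(i-1)! = \frac{n!}{(n-i)!\,i!}\cdot(i-1)! = \frac{n!}{(n-i)!\cdot i},\nonumber
\end{equation}
which is exactly the claimed formula.

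The main subtlety, and the step I expect to require the most care, is the overcounting correction in the cyclic arrangement: one must justify that rotations of a cycle notation describe the same permutation while reflections (reversals) do not, so the correct divisor is $i$ rather than $2i$. This is precisely where a naive count using $i!$ linear orderings goes wrong, and it is the only place where the structure of cycles (as opposed to arbitrary sequences) genuinely enters. Since the statement is quoted from Stanley's text as Lemma~\ref{lm2}, the proof reduces to recording this standard argument, and no deeper machinery is needed.
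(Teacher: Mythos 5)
Your proposal is correct: the paper gives no proof of its own (it simply cites Stanley's Proposition 1.3.2), and your direct count --- $\binom{n}{i}$ choices of support times $(i-1)!$ distinct $i$-cycles on that support, with the rotation-overcounting correctly justified and the divisor $i$ (not $2i$) --- is exactly the standard argument, i.e.\ the specialization of Stanley's general cycle-type formula $n!/\prod_j j^{h_j}h_j!$ to the type $[1^{n-i}i^1]$. Nothing further is needed.
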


Thus, by Lemma $\ref{lm2}$, it follows that
\begin{equation}
|\{\pi|\pi\in\mathrm{Sym}_n,disc(\pi)=[1^{n-i}i^1]\}|=\frac{n!}{(n-i)!\cdot i},\nonumber
\end{equation}
for any $i\in [n]\backslash\{1\}$. Moreover, there exists an edge of weight $d(\pi,\pi\,\gamma)=i$ for any  $\pi\in\mathrm{Sym}_n$ and any $\gamma\in H$ with $disc(\gamma)=[1^{n-i}i^1]$, $2\leqslant i\leqslant n$. By~(\ref{eq4}), we immediately have: 
$$|H|=\sum\limits_{i=2}^{n}\frac{n!}{(n-i)!\cdot i}.$$

\begin{example} It is obvious that $\mathrm{Sym}_2(H)\cong K_2$ and $\mathrm{Sym}_3(H)\cong K_6$. The graph $\mathrm{Sym}_4(H)$ has the following generating set: 
$$H=\{(1\,2),(1\,3),(1\,4),(2\,3),(2\,4),(3\,4),(1\,2\,3),(1\,3\,2),(1\,2\,4),(1\,4\,2),(1\,3\,4),$$ $$(1\,4\,3),(2\,3\,4),(2\,4\,3),(1\,2\,3\,4),(1\,2\,4\,3),(1\,3\,2\,4),(1\,3\,4\,2),(1\,4\,2\,3),(1\,4\,3\,2)\}$$ of size $|H|=20$. Moreover, each vertex of the graph is incident to six edges of weight two, eight edges of weight three, and six edges of weight four. However, it is not a distance-regular graph since, as example,  for permutations $\pi=(1\,2\,3\,4)$ and $\tau=(1\,2)(3\,4)$ that are both at distance four from the identity permutation $I_4$, the number of permutations $x$ at distance $d(x,I_4)=d(x,\pi)=2$ is equal to zero while the number of permutations $y$ at distance $d(y,I_4)=d(y,\tau)=2$ is equal to two.
\end{example}

Actually, it is obvious that more general result holds.

\begin{lemma}
The Cayley graph $\mathrm{Sym}_n(H), n\geqslant 4,$ is not distance-regular.
\label{lm3}
\end{lemma}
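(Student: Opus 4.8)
The plan is to generalize the counterexample already exhibited for $\mathrm{Sym}_4(H)$ in the Example to all $n\geqslant 4$, since distance-regularity requires that the intersection numbers depend only on the distances involved and not on the specific vertices chosen. Concretely, a graph is distance-regular if for any two vertices $u,v$ at distance $k$, the number of vertices at distance $i$ from $u$ and distance $j$ from $v$ depends only on $i,j,k$. To refute this it suffices to produce, for fixed $k$, two pairs of vertices realizing that distance but yielding different intersection counts. By vertex-transitivity of $\mathrm{Sym}_n(H)$ I may fix the first center to be the identity $I_n$, so the task reduces to choosing two permutations $\pi,\tau$ with $d(I_n,\pi)=d(I_n,\tau)=k$ for which some intersection number differs.

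First I would take the two permutations suggested by the Example and embed them in $\mathrm{Sym}_n$ by acting trivially on the remaining $n-4$ points: set $\pi=(1\,2\,3\,4)$ and $\tau=(1\,2)(3\,4)$, both fixing $5,6,\ldots,n$. By Lemma~\ref{lm1} the Hamming distance from $I_n$ to each of these is $4$, since each moves exactly the four points $\{1,2,3,4\}$. So both lie on the sphere $S_4(I_n)$, and $k=4$. The discriminating intersection number I would use is the count of permutations $x$ with $d(I_n,x)=2$ and $d(\pi,x)=2$ (respectively with $\tau$ in place of $\pi$).

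Next I would compute these two counts. For a permutation $x$ with $d(I_n,x)=2$, the set of moved points is a transposition support $\{a,b\}$ and $x=(a\,b)$. To also have $d(\pi,x)=2$ one needs $\pi x^{-1}$ (equivalently the composition measuring disagreement) to move exactly two points, i.e. $x$ and $\pi$ must agree on all but two positions; since $\pi$ is a $4$-cycle moving four points while $x$ moves only two, I would check directly that no single transposition can agree with the $4$-cycle $(1\,2\,3\,4)$ in all but two coordinates, giving count $0$. For $\tau=(1\,2)(3\,4)$, the transposition $x$ can coincide with one of the two disjoint $2$-cycles of $\tau$ while leaving the other block to be fixed, and a short enumeration shows exactly the transpositions $(1\,2)$ and $(3\,4)$ work, giving count $2$. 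Since these counts depend only on the four moved points and not on $n$, they are unchanged by the trivial action on $5,\ldots,n$.

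Finally, because one intersection number equals $0$ for the pair $(I_n,\pi)$ and $2$ for the pair $(I_n,\tau)$ while both pairs are at distance $4$, the defining condition of distance-regularity fails for every $n\geqslant 4$, proving the lemma. The main obstacle, such as it is, lies in the middle step: verifying cleanly that a single transposition cannot agree with a $4$-cycle in all but two positions, which I would argue by noting that wherever $x$ fixes a point moved by $\pi$ the two permutations disagree there, so agreement forces the moved supports to be compatible in a way the $4$-cycle's single orbit structure forbids. This is a finite check on four symbols and does not grow with $n$, so no genuine difficulty arises in extending the bound beyond the base case.
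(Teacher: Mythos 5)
Your proposal is correct and follows essentially the same route as the paper: the paper exhibits precisely this counterexample (the pair $\pi=(1\,2\,3\,4)$, $\tau=(1\,2)(3\,4)$ at distance $4$ from $I_4$, with intersection counts $0$ and $2$ at distance $2$) in its Example for $\mathrm{Sym}_4(H)$, and then states Lemma~\ref{lm3} as an obvious generalization. Your write-up merely makes explicit the embedding into $\mathrm{Sym}_n$ by fixing the points $5,\ldots,n$ and the finite verification on four symbols, which is exactly the argument the paper leaves implicit.
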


Now without loss of generality we consider a distance between the identity permutation $I_n$ and a permutation $\pi$ in the graph $\mathrm{Sym}_n(H)$. 

Let $w_H(\pi)=d(I_n,\pi)$ be the Hamming weight of $\pi$.

\begin{lemma}
\label{lm4}
For any permutation $\pi\in \mathrm{Sym}_n$ with $disc(\pi)= [1^{h_1} 2^{h_2}\ldots n^{h_n}]$, we have:
\begin{equation}
w_H(\pi)=\sum\limits_{i=2}^{n}ih_i. \nonumber
\end{equation}
Moreover, $\sum\limits_{i=1}^{n}ih_i=n$.
\end{lemma}
\begin{proof}
If $\pi=(a_1\ldots a_m)$ is a cycle of order $m$ then $d(I_n,\pi)=m$. If $\pi=(a_1\ldots a_s) (b_1\ldots b_t)$, where $(a_1\ldots a_s)$ and $(b_1\ldots b_t)$ are disjoint cycles, then  $\pi(i)\neq i$ for any $i\in \{a_1,\ldots,a_s,b_1,\ldots,b_t\}$. Hence, we have $d(I_n,\pi)=t+s$. If $disc(\pi)=[1^{h_1}\ldots n^{h_n}]$ then $w_H(\pi)=d(I_n,\pi)=\sum\limits_{i=2}^{n}ih_i$, and $\sum\limits_{i=1}^{n}ih_i=n.$
\end{proof}

\section{The key technical lemma}
\label{sec4}
Let $t$ and $k$ be integers. For convenience, we put $\binom{t}{k}=0$ for each $k<0$.  
\begin{lemma}
\label{lm5}
For $1\leqslant k \leqslant  \frac{2t+1}{3}$,
\begin{equation} \label{eq5}
\sum\limits_{j=0}^{k-1}\binom{2t+1-3k}{t-1-3j}\binom{2k}{1+2j}\leqslant 2\binom{2t-2}{t-1}.%\nonumber
\end{equation}
\end{lemma}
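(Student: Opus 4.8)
The plan is to peel the two binomial factors apart and reduce the whole statement to a clean growth estimate for central binomial coefficients. Throughout, write $c_M=\binom{M}{\lfloor M/2\rfloor}$ for the largest entry in row $M$ of Pascal's triangle. First I would bound the factor $\binom{2t+1-3k}{t-1-3j}$ by the central coefficient $c_{2t+1-3k}$ of its own row: this is the maximal binomial coefficient $\binom{2t+1-3k}{\cdot}$ and dominates every term (terms with a negative or too-large lower index simply vanish and are trivially dominated). Pulling $c_{2t+1-3k}$ out of the sum leaves $\sum_{j=0}^{k-1}\binom{2k}{1+2j}$, which is exactly the sum of the odd-indexed entries of row $2k$ and hence equals $2^{2k-1}$. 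This gives the reduction
\begin{equation}
\sum_{j=0}^{k-1}\binom{2t+1-3k}{t-1-3j}\binom{2k}{1+2j}\ \le\ 2^{2k-1}\,c_{2t+1-3k}. \nonumber
\end{equation}
It therefore suffices to prove the sub-lemma $2^{2k-2}c_{2t+1-3k}\le\binom{2t-2}{t-1}=c_{2t-2}$, i.e. $4^{k-1}c_{2t+1-3k}\le c_{2t-2}$, for all $1\le k\le (2t+1)/3$. I would note that this relaxation is \emph{exact} at $k=1$, where $2t+1-3k=2t-2$ is even, the sum has the single term $\binom{2t-2}{t-1}\binom{2}{1}=2\binom{2t-2}{t-1}$, and the target is attained with equality; so no slack is thrown away precisely where the inequality is tight.

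The substance of the argument is the elementary estimate
\begin{equation}
c_{M+3}\ \ge\ 4\,c_M\qquad\text{for all }M\ge 1. \nonumber
\end{equation}
I would derive this from the one-step ratios $c_{M+1}/c_M$, which equal $2$ when $M$ is odd and $\frac{M+1}{\lfloor M/2\rfloor+1}$ when $M$ is even. Multiplying three consecutive ratios and checking the two parity patterns, the product is at least $5$ when the block starts at an even $M\ge 2$ and at least $6$ when it starts at an odd $M\ge 1$; in both cases it exceeds $4$. Telescoping $c_{M+3}\ge 4c_M$ from $M=2t+1-3k$ up to $2t-2=(2t+1-3k)+3(k-1)$ then yields $c_{2t-2}\ge 4^{\,k-1}c_{2t+1-3k}$, which is exactly the sub-lemma. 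This telescoping is valid whenever $2t+1-3k\ge 1$, because then every index appearing in the chain stays $\ge 1$ and the estimate applies at each step.

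The one point where the three-step estimate genuinely breaks down is the boundary value $M=0$, where $c_3=3<4=4c_0$; and this occurs exactly once, when $3\mid(2t+1)$ and $k=(2t+1)/3$, forcing $2t+1-3k=0$. There the sub-lemma reads $4^{k-1}\le c_{2t-2}=c_{3(k-1)}$, which I would dispatch directly rather than through the failed step at $M=0$: for $k=1$ (forced $t=1$) both sides equal $1$, while for $k\ge 3$ one starts the telescope one block later at the verified base case $c_6=20\ge 4^2$ and applies $c_{M+3}\ge 4c_M$ for $M\ge 6$, obtaining $c_{3(k-1)}\ge 20\cdot 4^{\,k-3}>4^{\,k-1}$ (the value $k=2$ cannot arise, since $3\mid(2t+1)$ is incompatible with $2t+1=6$). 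Combining the generic telescoping with this single boundary check covers the entire range $1\le k\le (2t+1)/3$. I expect this boundary bookkeeping, not any conceptual difficulty, to be the main nuisance of the write-up: the entire nontrivial content is concentrated in the inequality $c_{M+3}\ge 4c_M$, and everything else is assembly.
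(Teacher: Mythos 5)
Your proof is correct, and it takes a genuinely different route from the paper's. The paper computes the sum explicitly for $k=1,2,3$ and, for $k\geqslant 4$, bounds each product term by a merged binomial coefficient, $\binom{2t+1-3k}{t-1-3j}\binom{2k}{1+2j}\leqslant\binom{2t+1-k}{t-j}$, collapses the resulting sum with Pascal's rule, and then compares the four surviving terms $2k\binom{2t+1-3k}{t-1}+\binom{2t+1-k}{t-1}+\binom{2t+1-k}{t-2}+\binom{2t-3}{t-3}$ against $2\binom{2t-2}{t-1}$ via ratio estimates, with separate small-$k$ verifications. You instead decouple the two binomial factors: you bound $\binom{2t+1-3k}{t-1-3j}$ by the central coefficient $c_{2t+1-3k}=\binom{2t+1-3k}{\lfloor(2t+1-3k)/2\rfloor}$ of its row, evaluate $\sum_{j=0}^{k-1}\binom{2k}{1+2j}=2^{2k-1}$ exactly, and reduce the lemma to the single growth estimate $c_{M+3}\geqslant 4c_M$ for $M\geqslant 1$, telescoped $k-1$ times; the lone failure of that estimate at $M=0$ is precisely your boundary case $3\mid(2t+1)$, $k=(2t+1)/3$, which you handle directly (including the parity remark excluding $k=2$). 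I checked the details --- the one-step ratios ($2$ from an odd row, $(M+1)/(\lfloor M/2\rfloor+1)$ from an even row), the odd-entry row sum, the exactness of the relaxation at $k=1$, and the boundary telescope from $c_6=20\geqslant 4^2$ --- and they are sound. Your approach buys uniformity and self-containment: one clean lemma carries all the content, and the apparent crudeness of the bound $2^{2k-1}c_{2t+1-3k}$ is harmless because each three-step block in fact grows by a factor of at least $5$, so the slack only increases with $k$. The paper's approach buys finer per-term control and the exact values of the sum for small $k$, at the price of more case analysis and several intermediate inequalities asserted rather than proved.
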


\begin{proof} If $k=1,2,3$ then the left part of~(\ref{eq5}) is equal to $$2\binom{2t-2}{t-1}, \ \  8\binom{2t-5}{t-1}, \ \  12\binom{2t-8}{t-1}+20\binom{2t-8}{t-4},$$ respectively.

If $k\geqslant 4$ then we have:
\begin{small}
\begin{align}
\sum\limits_{j=0}^{k-1}\binom{2t+1-3k}{t-1-3j}&\binom{2k}{1+2j}=2k\binom{2t+1-3k}{t-1}+\binom{2k}{3}\binom{2t+1-3k}{t-4}\nonumber\\
&+\binom{2k}{5}\binom{2t+1-3k}{t-7}+\sum\limits_{j=3}^{k-1}\binom{2t+1-3k}{t-1-3j}\binom{2k}{1+2j}.\nonumber
\end{align}
\end{small}
Then, it follows that
\begin{small}
\begin{align}
\sum\limits_{j=0}^{k-1}&\binom{2t+1-3k}{t-1-3j}\binom{2k}{1+2j}\nonumber\\
&\overset{(a)}{\leqslant} 2k\binom{2t+1-3k}{t-1}+\sum\limits_{j=1}^{k-1}\binom{2t+1-k}{t-j}\nonumber\\
&\overset{(b)}{\leqslant} 2k\binom{2t+1-3k}{t-1}+\binom{2t+1-k}{t-1}+\binom{2t+1-k}{t-2}\nonumber\\
&~~~~~~~~~~~~~~~~~~~~~~~~~~~~~~~~~~~~~~~~~~~~+\sum\limits_{j=3}^{k-2}\binom{2t-1-j}{t-j}+\binom{2t+1-k}{t-k+1}\nonumber\\
&\overset{(c)}{=}2k\binom{2t+1-3k}{t-1}+\binom{2t+1-k}{t-1}+\binom{2t+1-k}{t-2}+\binom{2t-3}{t-3},
\label{eq6}
\end{align}
\end{small}
where $\overset{(a)}{\leqslant}$ follows from $$\binom{2t+1-3k}{t-1-3j}\binom{2k}{1+2j}\leqslant \binom{2t+1-k}{t-j}$$ for every $1\leqslant j\leqslant k-1$, $\overset{(b)}{\leqslant}$ follows from $$\binom{2t+1-k}{t-j}\leqslant \binom{2t-1-j}{t-j}$$ for every $3\leqslant j\leqslant k-2$, and $\overset{(c)}{=}$ follows from $$\binom{t}{k}=\binom{t-1}{k}+\binom{t-1}{k-1}$$ for every $0\leqslant k\leqslant t$.

Now we compare the above values. Since 
$$\frac{\binom{2t-2}{t-1}}{\binom{2t-5}{t-1}}=\frac{(2t-4)}{(t-3)}\cdot \frac{(2t-3)(2t-2)}{(t-2)(t-1)}\geqslant 8,$$ 
then 
$$\binom{2t-2}{t-1}\geqslant 4\binom{2t-5}{t-1}.$$ 

Moreover, it can be shown that:

$$8\binom{2t-5}{t-1}\geqslant 12\binom{2t-8}{t-1}+20\binom{2t-8}{t-4}.$$

For $k\geqslant 5$, the following holds:
$$\frac{\binom{2t-3}{t-1}}{2k\binom{2t+1-3k}{t-1}}=\frac{(2t-3)!(t-1)!(t+2-3k)!}{2k(t-1)!(t-2)!(2t+1-3k)!}=\frac{\prod_{j=1}^{3k-4}(2t+1-3k+j)}{2k \prod_{j=1}^{3k-4}(t+2-3k+j)}\geqslant$$ $$\frac{1}{2k}\left(\frac{2t-3}{t-2}\right)^{3k-4}\geqslant \frac{2^{3k-4}}{2k}\geqslant 1.$$ 

Since for $k\geqslant 5$ we have $$\binom{2t-3}{t-1}\geqslant \binom{2t+1-k}{t-1},$$ $$\binom{2t-3}{t-2}\geqslant \binom{2t+1-k}{t-2},$$ and $$2\binom{2t-3}{t-2}\geqslant \binom{2t-3}{t-3}$$ then it follows that for $1\leqslant k \leqslant  \frac{2t+1}{3}$ we have: 
\begin{small}
\begin{align}
2\binom{2t-2}{t-1}&=2\binom{2t-3}{t-1}+2\binom{2t-3}{t-2}\nonumber\\
&\geqslant 2k\binom{2t+1-3k}{t-1}+\binom{2t+1-k}{t-1}+\binom{2t+1-k}{t-2}+\binom{2t-3}{t-3}\nonumber\\
&\overset{(d)}{\geqslant}\sum\limits_{j=0}^{k-1}\binom{2t+1-3k}{t-1-3j}\binom{2k}{1+2j},\nonumber
\end{align}
\end{small}
where $\overset{(d)}{\geqslant}$ follows from~(\ref{eq6}). This gives~(\ref{eq5}) and completes the proof.
\end{proof}

\section{The values of $N(n,d,r)$ for $d=2r$}
\label{sec5}

Let $\pi\in \mathrm{Sym}_n$ be a product of disjoint cycles which we call cycles of $\pi$. We denote $Ts(\pi)=\{i\in[n]|\pi(i)\neq i\}$ and $Tc(\pi)=\{<i,\pi(i)>|\pi(i)\neq i~\text{for}~i\in[n]\}$. For example, if $\pi=(1\,2)(4\,5\,6)$ then $Ts(\pi)=\{1,2,4,5,6\}$, $Tc(\pi)=\{<1,2>,<2,1>,<4,5>,<5,6>,<6,4>\}$. 

\begin{lemma}
\label{lm6}
For any two permutations $\pi,\tau\in \mathrm{Sym}_n$, the following equality holds:
\begin{equation}
d(\pi,\tau)=|Ts(\pi)\cup Ts(\tau)|-|Tc(\pi)\cap Tc(\tau)|.\nonumber
\end{equation}
\end{lemma}
\begin{proof}
For any $\pi(i)\neq \tau(i)$, by the definition of $Ts(\pi)$ and $Ts(\tau)$, we have $i\in Ts(\pi)\cup Ts(\tau)$. Moreover, if $i\in Ts(\pi)\cup Ts(\tau)$ and $\pi(i)=\tau(i)=j$ then $<i,j>\in Tc(\pi)\cap Tc(\tau)$. If $<i,j>\in Tc(\pi)\cap Tc(\tau)$ then $\pi(i)=\tau(i)=j\neq i$ and $i\in Ts(\pi)\cup Ts(\tau)$. Since $d(\pi,\tau)=|\{i\in [n]|\pi(i)\neq \tau(i)\}|$ we have $d(\pi,\tau)=|Ts(\pi)\cup Ts(\tau)|-|Tc(\pi)\cap Tc(\tau)|$.
\end{proof}

\begin{lemma}
\label{lm7}
For any two permutations $\pi,\tau\in \mathrm{Sym}_n$, if $disc(\pi)=disc(\tau)$ then we have:
\begin{equation}
d(I_n,\pi)=d(I_n,\tau)~\text{and}~
|B_r(\pi)\cap B_r(I_n)|=|B_r(\tau)\cap B_r(I_n)|\nonumber
\end{equation}
for any $n\geqslant r$.
\end{lemma}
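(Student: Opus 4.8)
The plan is to handle the two asserted equalities separately. The first is immediate from Lemma~\ref{lm4}, while the second rests on the classical fact that two permutations of $\mathrm{Sym}_n$ are conjugate if and only if they have the same cycle type, together with the invariance of the Hamming distance under conjugation.

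For the equality $d(I_n,\pi)=d(I_n,\tau)$, I would simply note that if $disc(\pi)=disc(\tau)=[1^{h_1}2^{h_2}\ldots n^{h_n}]$ then $\pi$ and $\tau$ share the same multiplicities $h_i$. Lemma~\ref{lm4} then gives $w_H(\pi)=\sum_{i=2}^{n}ih_i=w_H(\tau)$, and since $w_H(\pi)=d(I_n,\pi)$ by definition, the first equality follows with no further computation.

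For the equality of the intersection sizes, the structural input is that $disc(\pi)=disc(\tau)$ produces a permutation $\sigma\in\mathrm{Sym}_n$ with $\tau=\sigma\pi\sigma^{-1}$. I would then study the conjugation map $\phi\colon x\mapsto\sigma x\sigma^{-1}$ and show it is a Hamming-distance-preserving bijection of $\mathrm{Sym}_n$ that fixes $I_n$ and sends $\pi$ to $\tau$. Distance preservation follows because conjugation simultaneously relabels the positions through $\sigma$ and the images through $\sigma^{-1}$: by~(\ref{eq2}), two conjugated permutations disagree in position $j$ exactly when the originals disagree in the position indexed by $\sigma$, so injectivity together with the reindexing $j\mapsto\sigma(j)$ leaves the number of disagreements unchanged, i.e. $d(\phi(x),\phi(y))=d(x,y)$ for all $x,y$. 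Since $\phi(I_n)=I_n$ and $\phi(\pi)=\tau$, for any $x$ with $d(x,\pi)\leqslant r$ and $d(x,I_n)\leqslant r$ we obtain $d(\phi(x),\tau)=d(x,\pi)\leqslant r$ and $d(\phi(x),I_n)=d(x,I_n)\leqslant r$. Hence $\phi$ carries $B_r(\pi)\cap B_r(I_n)$ into $B_r(\tau)\cap B_r(I_n)$, and being a bijection with inverse the conjugation by $\sigma^{-1}$, it restricts to a bijection between the two intersections. This yields $|B_r(\pi)\cap B_r(I_n)|=|B_r(\tau)\cap B_r(I_n)|$.

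The only point demanding genuine care is the distance invariance under conjugation. The invariance under relabelling images alone is transparent from injectivity, but conjugation also permutes positions, and one must check that this reindexing does not change the disagreement count; this is the step where the paper's composition convention $\pi\gamma(j)=\gamma(\pi(j))$ must be tracked carefully so that $\phi$ really is an isometry fixing the identity. Everything else — the reduction to a single conjugating $\sigma$ and the transport of the two balls — is routine once that isometry is in hand.
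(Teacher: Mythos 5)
Your proposal is correct, and at its core it constructs the same object as the paper: the paper's map $f$, which relabels the entries of every cycle of $\sigma$ by the permutation sending the cycle entries of $\pi$ to those of $\tau$, \emph{is} conjugation by $f$ — the paper just never names it that way. The difference lies in how distance-invariance is verified, and this is where your route is genuinely cleaner. The paper never proves that its map is an isometry of $\mathrm{Sym}_n$; it only shows the two one-sided facts it needs, namely $w_H(f(\sigma))=w_H(\sigma)$ (via Lemma~\ref{lm4}) and $d(f(\sigma),f(\pi))\leqslant d(\sigma,\pi)$ (via the $Ts$/$Tc$ machinery of Lemma~\ref{lm6}), and then must run a second, symmetric argument with an inverse relabelling $f'$ to get the reverse inequality between the intersection sizes. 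You instead prove the full isometry property $d(\phi(x),\phi(y))=d(x,y)$ directly from the definition~(\ref{eq2}), by the reindexing $j\mapsto\sigma^{-1}(j)$ together with injectivity of $\sigma$; this makes $\phi$ an isometry fixing $I_n$ with $\phi(\pi)=\tau$, so the bijection between $B_r(\pi)\cap B_r(I_n)$ and $B_r(\tau)\cap B_r(I_n)$ follows in one step, with no need for the two-sided argument. What the paper's formulation buys in exchange is that it exercises the $Ts$/$Tc$ formalism of Lemma~\ref{lm6}, which is the workhorse for the later counting arguments (Lemmas~\ref{lm8}--\ref{lm12}); your group-theoretic packaging is more standard and self-contained but would still leave that machinery to be introduced later. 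Your closing caution about the composition convention $\pi\gamma(j)=\gamma(\pi(j))$ is well placed: under this convention the conjugating element appears on the opposite side ($\tau=\sigma^{-1}\pi\sigma$ rather than $\sigma\pi\sigma^{-1}$), but since the cycle-relabelling description of conjugation and the reindexing argument are unaffected, the proof goes through verbatim.
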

\begin{proof}
By Lemma~$\ref{lm4}$, if $disc(\pi)=disc(\tau)$ then $d(I_n,\pi)=d(I_n,\tau)$. 
Let $\pi=(a_1\ldots a_m)$ and $\tau=(b_1\ldots b_m)$. Consider a permutation $f:[n]\rightarrow[n]$ acting on elements of cycles such that $f(\sigma)=(f(c_{1}^{1})\,\ldots\,f(c_{l_1}^{1}))\ldots(f(c_{1}^{s})\,\ldots\,f(c_{l_s}^{s}))$, $\sum\limits_{i=1}^{s} l_i=n$, for each permutation $\sigma=(c_{1}^{1}\,\ldots\,c_{l_1}^{1})\ldots(c_{1}^{s}\,\ldots\,c_{l_s}^{s})$ expressed as a product of disjoint cycles. Clearly, $f(\sigma)$ is the product of disjoint cycles and $disc(\sigma)=disc(f(\sigma))$ for each $\sigma\in \mathrm{Sym}_n$.

Given any two permutations $\pi,\tau\in \mathrm{Sym}_n$ with $disc(\pi)=disc(\tau)$, we have that $\pi=(c_{1}^{1}\,\ldots\,c_{l_1}^{1})\ldots(c_{1}^{s}\,\ldots\,c_{l_s}^{s})$ and $\tau=(d_{1}^{1}\,\ldots\,d_{l_1}^{1})\ldots(d_{1}^{s}\,\ldots\,d_{l_s}^{s})$, where $\sum\limits_{i=1}^{s} l_i=n$. Here, we define $f(c_{i}^{j})=d_{i}^{j}$ for all $1\leqslant i\leqslant l_j$ and $1\leqslant j\leqslant s$. It follows that $f$ is a permutation and $f(\pi)=\tau$.

If $\sigma\in B_r(\pi)\cap B_r(I_n)$ then $w_H(\sigma)\leqslant r$ and $d(\sigma,\pi)\leqslant r$. %By the definition of $f$ and 
By Lemma~$\ref{lm4}$, we have $w_H(f(\sigma))=w_H(\sigma)\leqslant r$, and by Lemma~$\ref{lm6}$ we have:
$$d(\sigma,\pi)=|Ts(\sigma)\cup Ts(\pi)|-|Tc(\sigma)\cap Tc(\pi)|$$ 
and 
$$d(f(\sigma),f(\pi))=|Ts(f(\sigma))\cup Ts(f(\pi))|-|Tc(f(\sigma))\cap Tc(f(\pi))|.$$ 

Since $f$ is a permutation we have $|Ts(f(\sigma))\cup Ts(f(\pi))|=|Ts(\sigma)\cup Ts(\pi)|$. If $<i,j>\in Tc(\sigma)\cap Tc(\pi)$ then $\sigma(i)=j$ and $\pi(i)=j$. Thus, $i$ and $j$ are adjacent in some cycle of $\sigma$ and in $\pi$, and hence $f(i)$ and $f(j)$ are adjacent in some cycle of $f(\sigma)$ and in $f(\pi)$. Therefore, we have: 
$$<f(i),f(j)>\in Tc(f(\sigma))\cap Tc(f(\pi)).$$ 

So, $d(f(\sigma),f(\pi))=d(f(\sigma),\tau)\leqslant d(\sigma,\pi)=r$ and $f(\sigma)\in B_r(\tau)\cap B_r(I_n)$. Hence $|B_r(\pi)\cap B_r(I_n)|\leqslant |B_r(\tau)\cap B_r(I_n)|$. Similarly, there exists a permutation $f'(\tau)=\pi$ such that $f'(\sigma)\in B_r(\pi)\cap B_r(I_n)$ for each $\sigma\in B_r(\tau)\cap B_r(I_n)$, then $|B_r(\pi)\cap B_r(I_n)|\geqslant |B_r(\tau)\cap B_r(I_n)|$. Therefore, we have $|B_r(\pi)\cap B_r(I_n)|=|B_r(\tau)\cap B_r(I_n)|$ which complete the proof.
\end{proof}

Since the graph $\mathrm{Sym}_n(H)$ is vertex-transitive, in what it follows below we study 
$$I(n,d,r)=\max_{\pi,I_n\in\mathrm{Sym}_n, d(I_n,\pi)=d}|B_r(I_n)\cap B_r(\pi)|$$ instead of considering $N(n,d,r)$, where $n\geqslant r\geqslant \frac{d}{2}$.

\begin{lemma}
\label{lm8}
For any $n\geqslant 2r$, there exists a permutation $\pi \in \mathrm{Sym}_n$ with $w_H(\pi)=2r$ such that:
\begin{equation}
I(n,2r,r)=|B_r(\pi)\cap B_r(I_n)|=\sum\limits_{i=0}^{r}|\{\sigma|w_H(\sigma)=i,d(\sigma,\pi)\leqslant r\}|.\nonumber
\end{equation}
Moreover, if $\sigma\in B_r(\pi)\cap B_r(I_n)$ then $Tc(\sigma)\subset Tc(\pi)$ and $|Tc(\sigma)|=r$.
\end{lemma}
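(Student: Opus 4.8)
The plan is to separate the lemma into its two parts: the pair of displayed equalities, which are essentially bookkeeping, and the ``moreover'' claim, which carries the structural content and follows cleanly from the triangle inequality together with Lemma~\ref{lm6}.

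For the equalities, I would first note that by Lemma~\ref{lm4} the condition $d(I_n,\pi)=2r$ is equivalent to $w_H(\pi)=2r$, so the maximum defining $I(n,2r,r)$ is taken over the nonempty finite set of permutations of Hamming weight $2r$ (nonempty precisely because $n\geqslant 2r$). Hence the maximum is attained at some such $\pi$, giving the first equality $I(n,2r,r)=|B_r(\pi)\cap B_r(I_n)|$. The second equality is merely a partition of $B_r(I_n)$ by weight: since $\sigma\in B_r(I_n)$ means $w_H(\sigma)\leqslant r$, I would write $B_r(\pi)\cap B_r(I_n)$ as the disjoint union over $0\leqslant i\leqslant r$ of $\{\sigma\mid w_H(\sigma)=i,\ d(\sigma,\pi)\leqslant r\}$ and add up the cardinalities.

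The heart of the proof is the ``moreover'' statement, and here I would argue as follows. Fix any $\sigma\in B_r(\pi)\cap B_r(I_n)$. Then $w_H(\sigma)=d(I_n,\sigma)\leqslant r$ and $d(\sigma,\pi)\leqslant r$, while the triangle inequality gives $2r=d(I_n,\pi)\leqslant d(I_n,\sigma)+d(\sigma,\pi)$. Combining these forces equality throughout, so $w_H(\sigma)=r$ and $d(\sigma,\pi)=r$. Since each non-fixed index of $\sigma$ contributes exactly one element to $Ts(\sigma)$ and one pair to $Tc(\sigma)$, we have $|Ts(\sigma)|=|Tc(\sigma)|=w_H(\sigma)=r$, which already yields $|Tc(\sigma)|=r$; similarly $|Ts(\pi)|=|Tc(\pi)|=2r$. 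Applying Lemma~\ref{lm6} to $\sigma$ and $\pi$ and expanding $|Ts(\sigma)\cup Ts(\pi)|$ by inclusion--exclusion, the identity $d(\sigma,\pi)=r$ rearranges to $|Ts(\sigma)\cap Ts(\pi)|+|Tc(\sigma)\cap Tc(\pi)|=2r$.

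To finish I would invoke the obvious upper bounds $|Ts(\sigma)\cap Ts(\pi)|\leqslant|Ts(\sigma)|=r$ and $|Tc(\sigma)\cap Tc(\pi)|\leqslant|Tc(\sigma)|=r$. Their sum equals $2r$, so each bound must be tight; in particular $|Tc(\sigma)\cap Tc(\pi)|=|Tc(\sigma)|=r$, which means $Tc(\sigma)\subseteq Tc(\pi)$, as claimed. I do not expect a genuine obstacle here: the only points requiring care are the double-counting identity $|Ts(\cdot)|=|Tc(\cdot)|=w_H(\cdot)$ and keeping the inclusion--exclusion signs straight, after which the two-sided squeeze forcing both intersections to have size $r$ is automatic. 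The real work is deferred to the following section, where this containment $Tc(\sigma)\subset Tc(\pi)$ reduces the computation of $|B_r(\pi)\cap B_r(I_n)|$ to a count over the cycle structure of $\pi$ that the binomial estimate of Lemma~\ref{lm5} is designed to control.
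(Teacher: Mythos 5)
Your proposal is correct and takes essentially the same route as the paper: its key inequality $d(\sigma,\pi)=|Ts(\sigma)\cup Ts(\pi)|-|Tc(\sigma)\cap Tc(\pi)|\geqslant 2r-w_H(\sigma)$ is precisely your triangle-inequality step, derived via Lemma~\ref{lm6}, and both arguments then force tightness to kill the weights $i\leqslant r-1$ and conclude $Tc(\sigma)\subset Tc(\pi)$ with $|Tc(\sigma)|=r$. The only cosmetic difference is that you run the squeeze on $|Ts(\sigma)\cap Ts(\pi)|+|Tc(\sigma)\cap Tc(\pi)|=2r$ via inclusion--exclusion, whereas the paper tracks equality in $|Ts(\sigma)\cup Ts(\pi)|\geqslant 2r$ and $|Tc(\sigma)\cap Tc(\pi)|\leqslant w_H(\sigma)$ directly.
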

\begin{proof}
By Lemmas~$\ref{lm1}$ and $\ref{lm7}$, there exists a permutation $\pi$ with $w_H(\pi)=2r$ such that:
\begin{equation}
I(n,2r,r)=|B_r(\pi)\cap B_r(I_n)|=\sum\limits_{i=0}^{r}|\{\sigma|w_H(\sigma)=i,d(\sigma,\pi)\leqslant r\}|.\nonumber
\end{equation}
Moreover, we have $d(\sigma,\pi)=|Ts(\sigma)\cup Ts(\pi)|-|Tc(\sigma)\cap Tc(\pi)|\geqslant 2r-w_H(\sigma)$. Hence, for any $i$ such that $0\leqslant i\leqslant r-1$ we have: $$|\{\sigma|w_H(\sigma)=i,d(\sigma,\pi)\leqslant r\}|=0.$$ The case when $w_H(\sigma)=r$ appears if $|Ts(\sigma)\cup Ts(\pi)|=2r$ and $|Tc(\sigma)\cap Tc(\pi)|=r$. Hence, in this case we have $d(\sigma,\pi)=r$, and furthermore, $Tc(\sigma)\subset Tc(\pi)$ with $|Tc(\sigma)|=r$ and $|Tc(\pi)|=2r$. Therefore, for each cycle $(x_1\,\ldots\,x_l)$ of $\sigma$ there should be a cycle of $\pi$, i.e., $\sigma$ is presented by a product of disjoint cycles which are cycles of $\pi$.
\end{proof}

\begin{lemma}
\label{lm9}
For any $n\geqslant 2r$, there exists a permutation $\pi \in \mathrm{Sym}_n$ whose cycles are only of lengths two or three and $$I(n,2r,r)=|B_r(\pi)\cap B_r(I_n)|.$$
\end{lemma}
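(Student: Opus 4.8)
The plan is to reduce $|B_r(\pi)\cap B_r(I_n)|$ to a purely combinatorial count over the cycle lengths of $\pi$, and then to apply a length-preserving exchange on the cycles that never decreases this count and eventually leaves only cycles of length $2$ and $3$. First I would observe that the analysis of Lemma~\ref{lm8} applies to \emph{any} $\pi$ with $w_H(\pi)=2r$, not only to an optimal one: since then $|Ts(\pi)|=2r$, Lemma~\ref{lm6} gives for every $\sigma\in B_r(I_n)$
$$d(\sigma,\pi)=|Ts(\sigma)\cup Ts(\pi)|-|Tc(\sigma)\cap Tc(\pi)|\geqslant 2r-w_H(\sigma)\geqslant r,$$
with equality forcing $w_H(\sigma)=r$, $Tc(\sigma)\subset Tc(\pi)$ and $|Tc(\sigma)|=r$; exactly as in Lemma~\ref{lm8}, such $\sigma$ is the product of a sub-collection of the disjoint cycles of $\pi$ whose lengths sum to $r$. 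Hence, if $\pi$ has cycles $C_1,\dots,C_m$ of lengths $\ell_1,\dots,\ell_m$ (each $\geqslant 2$, with $\sum_j\ell_j=2r$), then
$$|B_r(\pi)\cap B_r(I_n)|=\bigl|\{\,J\subseteq\{1,\dots,m\}:\textstyle\sum_{j\in J}\ell_j=r\,\}\bigr|,$$
a quantity depending only on the multiset of lengths $\ell_1,\dots,\ell_m$.

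The heart of the argument is a single exchange. Suppose some cycle, say $C_1$, has length $\ell:=\ell_1\geqslant 4$, and let $\pi'$ be obtained by regrouping the $\ell$ points of $C_1$ into two disjoint cycles of lengths $2$ and $\ell-2$ (both $\geqslant 2$), leaving the remaining cycles untouched; then $w_H(\pi')=2r$ and, as $n\geqslant 2r$, we have $\pi'\in\mathrm{Sym}_n$. Writing $A_s$ for the number of sub-collections of $\{C_2,\dots,C_m\}$ of total length $s$ (with $A_s=0$ for $s<0$), the count for $\pi$ splits according to whether $C_1$ is used as $A_r+A_{r-\ell}$, while the count for $\pi'$ splits over the four choices on the two new cycles as $A_r+A_{r-2}+A_{r-(\ell-2)}+A_{r-\ell}$. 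Therefore
$$|B_r(\pi')\cap B_r(I_n)|-|B_r(\pi)\cap B_r(I_n)|=A_{r-2}+A_{r-(\ell-2)}\geqslant 0,$$
so the exchange never decreases the intersection size.

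Finally I would iterate the exchange. Each application keeps the total length $2r$ fixed and strictly decreases the potential $\Phi(\pi)=\sum_{j:\,\ell_j\geqslant 4}\ell_j$, since splitting an $\ell\geqslant 4$ cycle removes $\ell$ from $\Phi$ and restores $\ell-2$ only when $\ell-2\geqslant 4$; as $\Phi\geqslant 0$, after finitely many steps $\Phi=0$ and every cycle has length $2$ or $3$. Starting from a permutation $\pi_0$ realising $I(n,2r,r)$, which exists by Lemma~\ref{lm8}, and applying the exchanges yields such a $\pi$ with $|B_r(\pi)\cap B_r(I_n)|\geqslant|B_r(\pi_0)\cap B_r(I_n)|=I(n,2r,r)$, and maximality forces equality, which is the claim. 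I expect the exchange and the termination to be routine; the only step needing genuine care is the first one, namely verifying that the intersection count is governed purely by the cycle-length multiset for \emph{every} weight-$2r$ permutation, since it is precisely this reformulation that lets one compare $\pi$ and $\pi'$ through the common nonnegative quantities $A_s$.
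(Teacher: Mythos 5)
Your proposal is correct and takes essentially the same route as the paper's own proof: both rest on the Lemma~\ref{lm8} characterization that every $\sigma$ in the intersection is a product of a sub-collection of the cycles of $\pi$ with total length $r$, and both perform the exchange that splits a cycle of length $\ell\geqslant 4$ into a $2$-cycle and an $(\ell-2)$-cycle, showing the intersection size cannot decrease. The only difference is presentational: the paper argues via the injection $\sigma\mapsto\sigma'$ and leaves the iteration implicit, while you make the subset-sum count $A_s$ and the termination argument (via the potential $\Phi$) explicit, which tightens the same argument rather than changing it.
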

\begin{proof}
Let $\pi'\in \mathrm{Sym}_n$ with $I(n,2r,r)=|B_r(\pi')\cap B_r(I_n)|$ and $w_H(\pi')=2r$. Assume that $\pi'$ has a cycle $(y_1\,y_2\,\ldots\,y_t)$ of length $t\geqslant 4$. We consider a permutation $\pi$ that has all the cycles of $\pi'$ but the cycle $(y_1\,\ldots\,y_t)$ along with two additional cycles $(y_1\,y_2)$ and $(y_3\,\ldots\,y_t)$.

Let $\sigma \in B_r(\pi')\cap B_r(I_n)$. Then $Tc(\sigma)\subset Tc(\pi')$ with $|Tc(\sigma)|=r$. If $\sigma$ does not contain  $(y_1\,\ldots\,y_t)$ then $Tc(\sigma)\subset Tc(\pi)$. Otherwise, there exists a permutation $\sigma'$ that has all the cycles of $\sigma$ but $(y_1\,\ldots\,y_t)$ with two additional cycles $(y_1\,y_2)$ and $(y_3\,\ldots\,y_t)$, $|Tc(\sigma')|=r$, and $Tc(\sigma')\subset Tc(\pi)$. Thus, $|B_r(\pi)\cap B_r(I_n)|\geqslant |B_r(\pi')\cap B_r(I_n)|=I(n,2r,r)$. Therefore, there exists a permutation $\pi$ such that $I(n,2r,r)=|B_r(\pi)\cap B_r(I_n)|$ where $\pi$ has only cycles of lengths two or three.
\end{proof}

\begin{theorem}
\label{thm1}
For any $n\geqslant 2r$ and $t\geqslant 2$, we have:
\begin{equation}
N(n,2r,r)=
\begin{cases}
\binom{2t}{t},~~\text{if $r=2t$};\\
2\binom{2t-2}{t-1},~~\text{if $r=2t+1$}.\\
\end{cases}
\label{eq7}
\end{equation}
\end{theorem}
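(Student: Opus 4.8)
The plan is to reduce $N(n,2r,r)$ to a purely enumerative optimization over cycle types of the optimal centre, and then to invoke Lemma~\ref{lm5} for the odd case together with an analogous inequality for the even case.

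First I would observe that $N(n,2r,r)=I(n,2r,r)$. Indeed, by~(\ref{eq3}) we have $N(n,2r,r)=\max_{k\geqslant 2r}I(n,k,r)$, and whenever $d(\pi,\tau)=k>2r$ the triangle inequality for the Hamming distance forces $B_r(\pi)\cap B_r(\tau)=\emptyset$, so $I(n,k,r)=0$ for $k>2r$ and only the term $I(n,2r,r)$ survives. By Lemmas~\ref{lm8} and~\ref{lm9} there is an optimal centre $\pi$ with $w_H(\pi)=2r$ whose cycles all have length two or three, and every $\sigma\in B_r(\pi)\cap B_r(I_n)$ is the product of a sub-collection of the cycles of $\pi$ of total length exactly $r$. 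Writing $a$ and $b$ for the numbers of $2$-cycles and $3$-cycles of $\pi$, so that $2a+3b=2r$ (which forces $b$ even, $b=2m$), the intersection size becomes the number of ways to select $j$ of the $2$-cycles and $k$ of the $3$-cycles with $2j+3k=r$, namely
\[
f(a,b)=\sum_{2j+3k=r}\binom{a}{j}\binom{b}{k}.
\]
Thus $I(n,2r,r)$ equals the maximum of $f(a,b)$ over all admissible $(a,b)$, and the theorem reduces to evaluating this maximum.

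Second, for the odd case $r=2t+1$ parity forces $k$ odd in every surviving term, so I would set $k=1+2l$, $j=t-1-3l$ and substitute $a=2t+1-3m$, $b=2m$. The resulting count
\[
\sum_{l=0}^{m-1}\binom{2t+1-3m}{t-1-3l}\binom{2m}{1+2l}
\]
is exactly the left-hand side of~(\ref{eq5}) with $k$ replaced by $m$, and the admissible range $1\leqslant m\leqslant\frac{2t+1}{3}$ (required for $a\geqslant 0$) is precisely the hypothesis of Lemma~\ref{lm5}. Hence $f\leqslant 2\binom{2t-2}{t-1}$ for every admissible $m$. The configuration $m=1$ (that is, $b=2$ three-cycles and $a=2t-2$ two-cycles) gives $\binom{2t-2}{t-1}\binom{2}{1}=2\binom{2t-2}{t-1}$, so the bound is attained and the odd case is settled.

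Third, for the even case $r=2t$ parity forces $k$ even; setting $k=2l$, $j=t-3l$, $a=2t-3m$, $b=2m$ turns the count into
\[
\sum_{l\geqslant 0}\binom{2t-3m}{t-3l}\binom{2m}{2l}.
\]
The all-transposition centre $m=0$ gives exactly $\binom{2t}{t}$, so it remains to prove the companion inequality that this sum is $\leqslant\binom{2t}{t}$ for every admissible $m\geqslant 1$. I would establish it by the same scheme used for Lemma~\ref{lm5}: check the first values of $m$ directly (for $m=1$ the sum collapses to $2\binom{2t-3}{t}$, dominated by $\binom{2t}{t}$ since $\binom{2t}{t}/\binom{2t-3}{t}=\tfrac{2t(2t-1)(2t-2)}{t(t-1)(t-2)}>8$ for $t\geqslant 3$, the remaining case $t=2$ being trivial), and for larger $m$ bound each summand by a single binomial and telescope the Pascal recurrence down to $\binom{2t}{t}=\binom{2t-1}{t}+\binom{2t-1}{t-1}$, exactly as in the chain~(\ref{eq6}).

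The main obstacle is this even-case companion inequality. A warning is that $f$ is \emph{not} monotone in $m$ (for instance at $t=3$ one has $f(0)=20$, $f(1)=2$, $f(2)=6$), so no simple ``replace two $3$-cycles by three $2$-cycles'' neighbour-swap argument can work; one genuinely needs a global term-by-term estimate in the spirit of~(\ref{eq6}), comparing the whole sum against the central binomial coefficient $\binom{2t}{t}$. By contrast, the odd case is essentially immediate once the count is matched with the left-hand side of~(\ref{eq5}).
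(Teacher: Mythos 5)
Your proposal is correct and follows essentially the same route as the paper's proof: the reduction via Lemmas~\ref{lm8} and~\ref{lm9} to counting sub-collections of $2$- and $3$-cycles of total length $r$, the identification of the odd-case sum with the left-hand side of~(\ref{eq5}) so that Lemma~\ref{lm5} applies, and the same extremal configurations (all $2$-cycles for $r=2t$; two $3$-cycles plus $2t-2$ two-cycles for $r=2t+1$). The even-case ``companion inequality'' you flag as the main obstacle is handled in the paper inline by precisely the scheme you sketch: each summand is bounded via $\binom{2t-3k}{t-3j}\binom{2k}{2j}\leqslant\binom{2t-k}{t-j}$ and the resulting sum $\sum_{j=0}^{k}\binom{2t-k}{t-j}$ telescopes by Pascal's rule to $\binom{2t}{t}$, so your plan closes without difficulty.
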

\begin{proof}
By Lemmas~$\ref{lm8}$ and~$\ref{lm9}$, we let $\pi\in \mathrm{Sym}_n$ with $w_H(\pi)=2r$ that is given as a product of disjoint cycles of only lengths two or three such that:
\begin{equation}
I(n,2r,r)=|B_r(\pi)\cap B_r(I_n)|,\nonumber
\end{equation}
where $\pi$ has $p$ cycles of length two and $q$ cycles of length three.

If $r=2t$ then $|Tc(\pi)|=4t$. Thus, $2p+3q=4t$. Let $\sigma \in B_r(\pi)\cap B_r(I_n)$. Since $Tc(\sigma)\subset Tc(\pi)$ and $|Tc(\sigma)|=2t$ then $\sigma$ has $p_1$ cycles of length two and $q_1$ cycles of length three, i.e. $2p_1+3q_1=2t$, where $0\leqslant p_1\leqslant p$ and $0\leqslant q_1\leqslant q$. The number of such permutations $\sigma$ is equal to $\binom{p}{p_1}\binom{q}{q_1}$. Since $2p+3q=4t$ and $p,q,t$ are non-negative integers then $q=2k$ and $p=2t-3k$ for some non-negative integer $k$, where $0\leqslant k\leqslant \frac{2t}{3}$. Since $2p_1+3q_1=2t$ and $p_1,q_1,t$ are non-negative integers then $q_1=2j$ and $p_1=t-3j$ for some non-negative integer $j$, where $0\leqslant j\leqslant k$. So, we have:
\begin{equation}
|B_r(\pi)\cap B_r(I_n)|=\sum\limits_{j=0}^{k}\binom{p}{p_1}\binom{q}{q_1}=\sum\limits_{j=0}^{k}\binom{2t-3k}{t-3j}\binom{2k}{2j}.\nonumber
\end{equation}
Since $\binom{2t-3k}{t-3j}\binom{2k}{2j}\leqslant \binom{2t-k}{t-j}$ for every $j$, $0\leqslant j\leqslant k$, and $\binom{p}{p_1}=\binom{p-1}{p_1}+\binom{p-1}{p_1-1}$ then we have:
\begin{align}
\sum\limits_{j=0}^{k}\binom{2t-3k}{t-3j}\binom{2k}{2j}&\leqslant \sum\limits_{j=0}^{k}\binom{2t-k}{t-j}\nonumber\\
&\leqslant \sum\limits_{j=0}^{k-1}\binom{2t-j-1}{t-j}+\binom{2t-k}{t-k}=\binom{2t}{t}.\nonumber
\end{align}
Thus, we have:
\begin{equation}
|B_r(\pi)\cap B_r(I_n)|\leqslant \binom{2t}{t}.\nonumber
\end{equation}

Let $\pi=(1\,2)(3\,4)\dots(2i-1\,2i)\ldots(4t-1\,4t)$. Then choosing $t$ cycles among $2t$ cycles of $\pi$ one can get a permutation $\sigma$. Hence, we have:
$$|\{\sigma|w_H(\sigma)=2t,d(\sigma,\pi)\leqslant r\}|= \binom{2t}{t},$$ and finally for $r=2t$ we have:
\begin{equation}
I(n,2r,r)=\binom{2t}{t}.\nonumber
\end{equation}

If $r=2t+1$ then $|Tc(\pi)|=4t+2$. Thus, $2p+3q=4t+2$. Let $\sigma \in B_r(\pi)\cap B_r(I_n)$. Since $Tc(\sigma)\subset Tc(\pi)$ and $|Tc(\sigma)|=2t+1$ then $\sigma$ has $p_1$ cycles of length two and $q_1$ cycles of length three such that $2p_1+3q_1=2t+1$, where $0\leqslant p_1\leqslant p$ and $0\leqslant q_1\leqslant q$. There are $\binom{p}{p_1}\binom{q}{q_1}$ such permutations $\sigma$. Since $2p+3q=4t+2$ and $p,q,t$ are non-negative integers then $q=2k$ and $p=2t+1-3k$ for some non-negative integer $k$, $0\leqslant k\leqslant \frac{2t+1}{3}$. Since $2p_1+3q_1=2t+1$ and $p_1,q_1,t$ are non-negative integers then $q_1=1+2j$ and $p_1=t-1-3j$ for some non-negative integer $j$, $0\leqslant j\leqslant k-1$, where $k\geqslant 1$. Therefore, we have: 
$$\binom{p}{p_1}\binom{q}{q_1}=\binom{2t-3k}{t-1-3j}\binom{2k}{1+2j},$$ which gives us:
\begin{equation}
|B_r(\pi)\cap B_r(I_n)|=\sum\limits_{j=0}^{k-1}\binom{2t+1-3k}{t-1-3j}\binom{2k}{1+2j}.\nonumber
\end{equation}

By Lemma~$\ref{lm5}$, we have:
\begin{equation}
|B_r(\pi)\cap B_r(I_n)|\leqslant 2\binom{2t-2}{t-1}.\nonumber
\end{equation}
Moreover, we set $\pi=(1\,2\,4t+1)(3\,4)\ldots(2i-1\,2i)\ldots(4t-1\,4t\,4t+2)$ such that $I(n,2r,r)=|B_r(\pi)\cap B_r(I_n)|=2\binom{2t-2}{t-1}$. Hence, for any $d\geqslant 2r+1$ we have $I(n,d,r)=0$, and finally for $n\geqslant 2r$ we get:
\begin{equation}
N(n,2r,r)=
\begin{cases}
\binom{2t}{t},~~\text{if $r=2t$},\\
2\binom{2t-2}{t-1},~~\text{if $r=2t+1$},\\
\end{cases}
\nonumber
\end{equation}
which completes the proof.
\end{proof}

\section{The lower bound on $N(n,d,r)$ for $d=2r-1$}
\label{sec6}

To give the lower bound on $N(n,2r-1,r)$ we need a few additional results.

\begin{lemma}
\label{lm10}
For any $n\geqslant 2r-1$, there exists a permutation $\pi \in \mathrm{Sym}_n$ with $w_H(\pi)=2r-1$ such that:
\begin{equation}
I(n,2r-1,r)=|B_r(\pi)\cap B_r(I_n)|=\sum\limits_{i=0}^{r}|\{\sigma|w_H(\sigma)=i,d(\sigma,\pi)\leqslant r\}|.\nonumber
\end{equation}
Moreover, if $\sigma\in B_r(\pi)\cap B_r(I_n)$ then $Tc(\sigma)\subset Tc(\pi)$ and $|Tc(\sigma)|=r-1$, or $Tc(\sigma)\subset Tc(\pi)$ and $|Tc(\sigma)|=r$, or $|Tc(\sigma)\cap Tc(\pi)|=r-1$ and $|Tc(\sigma)|=r$.
\end{lemma}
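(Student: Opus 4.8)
The plan is to follow the template of Lemma~\ref{lm8} but to carry along one extra unit of slack coming from the parity mismatch between $d=2r-1$ and $2\,w_H(\sigma)$. First I would observe that the opening displayed equality is essentially bookkeeping: $I(n,2r-1,r)$ is a maximum over the finite set of permutations $\pi$ with $d(I_n,\pi)=2r-1$, so it is attained, and by Lemmas~\ref{lm1} and~\ref{lm7} the value $|B_r(\pi)\cap B_r(I_n)|$ depends only on $disc(\pi)$, so I may fix a convenient representative $\pi$ with $w_H(\pi)=2r-1$. Partitioning $B_r(\pi)\cap B_r(I_n)$ according to the Hamming weight $w_H(\sigma)=i$ of its elements (which ranges over $\{0,1,\dots,r\}$ because $\sigma\in B_r(I_n)$) immediately yields
$$|B_r(\pi)\cap B_r(I_n)|=\sum_{i=0}^{r}|\{\sigma\mid w_H(\sigma)=i,\ d(\sigma,\pi)\leqslant r\}|.$$

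Next I would extract a lower bound on $d(\sigma,\pi)$. By Lemma~\ref{lm6}, $d(\sigma,\pi)=|Ts(\sigma)\cup Ts(\pi)|-|Tc(\sigma)\cap Tc(\pi)|$. Since $|Ts(\pi)|=w_H(\pi)=2r-1$ and $|Tc(\sigma)|=|Ts(\sigma)|=w_H(\sigma)$, I would introduce the two nonnegative defects $a=|Ts(\sigma)\setminus Ts(\pi)|$ and $b=|Tc(\sigma)\setminus Tc(\pi)|$, so that
$$d(\sigma,\pi)=\bigl(2r-1-w_H(\sigma)\bigr)+a+b\geqslant 2r-1-w_H(\sigma).$$
Imposing $d(\sigma,\pi)\leqslant r$ then forces $w_H(\sigma)\geqslant r-1$, which annihilates every summand with $0\leqslant i\leqslant r-2$ and leaves only $i=r-1$ and $i=r$ to be analysed.

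The case analysis is driven entirely by the identity $d(\sigma,\pi)=(2r-1-w_H(\sigma))+a+b$. For $w_H(\sigma)=r-1$ it reads $d(\sigma,\pi)=r+a+b$, so $d(\sigma,\pi)\leqslant r$ forces $a=b=0$; thus $Tc(\sigma)\subseteq Tc(\pi)$ with $|Tc(\sigma)|=r-1$, the first case. For $w_H(\sigma)=r$ it reads $d(\sigma,\pi)=(r-1)+a+b$, so $a+b\leqslant 1$. If $a+b=0$ then $Tc(\sigma)\subseteq Tc(\pi)$ with $|Tc(\sigma)|=r$, the second case. If $a+b=1$, here I expect the one genuinely non-routine step: ruling out $(a,b)=(1,0)$. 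The point is that $b=0$ means $Tc(\sigma)\subseteq Tc(\pi)$, and every pair $\langle i,\sigma(i)\rangle\in Tc(\sigma)$ satisfies $\pi(i)=\sigma(i)\neq i$, whence $i\in Ts(\pi)$ and $Ts(\sigma)\subseteq Ts(\pi)$, forcing $a=0$; thus $b=0\Rightarrow a=0$, and the pair $(1,0)$ is impossible. Consequently $a+b=1$ can only be $(a,b)=(0,1)$, i.e.\ $|Tc(\sigma)\cap Tc(\pi)|=r-1$ with $|Tc(\sigma)|=r$, the third case. Collecting the three cases completes the argument; the crux is the implication $Tc(\sigma)\subseteq Tc(\pi)\Rightarrow Ts(\sigma)\subseteq Ts(\pi)$, everything else being the defect computation above.
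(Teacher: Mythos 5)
Your proof is correct and follows essentially the same route as the paper: reduce to a representative $\pi$ with $w_H(\pi)=2r-1$ via Lemmas~\ref{lm1} and~\ref{lm7}, partition $B_r(\pi)\cap B_r(I_n)$ by Hamming weight, and use the distance formula of Lemma~\ref{lm6} to eliminate weights below $r-1$ and classify the weights $r-1$ and $r$. If anything, your defect bookkeeping $(a,b)$ is slightly more careful than the paper's own argument, which in the weight-$r$ case simply asserts $|Ts(\sigma)\cup Ts(\pi)|=2r-1$ without comment; your explicit observation that $Tc(\sigma)\subseteq Tc(\pi)$ forces $Ts(\sigma)\subseteq Ts(\pi)$, ruling out $(a,b)=(1,0)$, makes that step airtight.
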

\begin{proof}
By Lemmas~$\ref{lm1}$ and $\ref{lm7}$, there exists a permutation $\pi$ with $w_H(\pi)=2r-1$ such that:
\begin{equation}
I(n,2r,r)=|B_r(\pi)\cap B_r(I_n)|=\sum\limits_{i=0}^{r}|\{\sigma|w_H(\sigma)=i,d(\sigma,\pi)\leqslant r\}|.\nonumber
\end{equation}
Moreover, $d(\sigma,\pi)=|Ts(\sigma)\cup Ts(\pi)|-|Tc(\sigma)\cap Tc(\pi)|\geqslant 2r-1-w_H(\sigma)$. Hence, for any $i$ such that $0\leqslant i\leqslant r-2$ we have $|\{\sigma|w_H(\sigma)=i,d(\sigma,\pi)\leqslant r\}|=0$. The case when $w_H(\sigma)=r-1$ appears if $|Ts(\sigma)\cup Ts(\pi)|=2r-1$ and $|Tc(\sigma)\cap Tc(\pi)|=r-1$. Hence, in this case we have $d(\sigma,\pi)=r$. Furthermore,  $Tc(\sigma)\subset Tc(\pi)$ with $|Tc(\sigma)|=r-1$. Therefore, for each cycle $(x_1\ldots x_l)$ of $\sigma$ there should be a cycle of $\pi$, i.~e., $\sigma$ is presented by a product of disjoint cycles which are cycles of $\pi$.

Similarly, the case $w_H(\sigma)=r$ appears if $|Ts(\sigma)\cup Ts(\pi)|=2r-1$ and $|Tc(\sigma)\cap Tc(\pi)|\leqslant r$. Hence,  $d(\sigma,\pi)\leqslant r$. Furthermore, $Tc(\sigma)\subset Tc(\pi)$ with $|Tc(\sigma)|=r$, or $|Tc(\sigma)\cap Tc(\pi)|=r-1$ with $|Tc(\sigma)|=r$.
\end{proof}

Let $N_1(\pi)$, $N_2(\pi)$ and $N_3(\pi)$ be the sets of $\sigma$ such that 1) $Tc(\sigma)\subset Tc(\pi)$ with $|Tc(\sigma)|=r-1$; 2) $Tc(\sigma)\subset Tc(\pi)$ with $|Tc(\sigma)|=r$; and 3) $|Tc(\sigma)\cap Tc(\pi)|=r-1$, $|Tc(\sigma)|=r$, where  $\pi,\sigma \in \mathrm{Sym}_n$  and $w_H(\pi)=2r-1\leqslant n$. Then the following holds:
$$|B_r(\pi)\cap B_r(I_n)|=|N_1(\pi)|+|N_2(\pi)|+|N_3(\pi)|.$$

\begin{lemma}
\label{lm11}
Let $\pi,\beta \in\mathrm{Sym}_n$  and $w_H(\pi)=w_H(\beta)=2r-1\leqslant n$ for some $r\geqslant 2$. If $Ts(\pi)=Ts(\beta)$ and $\pi$ has all cycles of $\beta$ but a cycle $(y_1\,y_2\ldots y_t)$, also has two cycles $(y_1\,y_2\ldots y_s)$ and $(y_{s+1}\,\ldots y_t)$ with $s\leqslant t-2$, then $|N_1(\pi)|\geqslant |N_1(\beta)|$ and $|N_2(\pi)|\geqslant |N_2(\beta)|$. 
\end{lemma}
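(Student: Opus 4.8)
The plan is to reduce everything to counting subcollections of cycles, using the characterization already extracted in the proofs of Lemmas~\ref{lm8} and~\ref{lm10}: whenever $Tc(\sigma)\subset Tc(\pi)$, the permutation $\sigma$ is a product of a subcollection of the cycles of $\pi$, and then $|Tc(\sigma)|=w_H(\sigma)$ equals the sum of the lengths of the chosen cycles. Consequently $|N_1(\pi)|$ is exactly the number of subcollections of the cycles of $\pi$ whose total length equals $r-1$, and $|N_2(\pi)|$ is the number whose total length equals $r$; the analogous statements hold for $\beta$. So the lemma becomes a purely enumerative comparison between subset-sum counts for the cycle-length multisets of $\pi$ and of $\beta$.

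First I would fix notation for the common part. Since $Ts(\pi)=Ts(\beta)$ and $\pi,\beta$ share all cycles except that the single $\beta$-cycle $(y_1\,y_2\ldots y_t)$ of length $t$ is replaced in $\pi$ by the two cycles $(y_1\ldots y_s)$ and $(y_{s+1}\ldots y_t)$ of lengths $s$ and $t-s$ (with $2\leqslant s\leqslant t-2$, since $s\leqslant t-2$ is the hypothesis and $s\geqslant 2$ is forced by $Ts(\pi)=Ts(\beta)$), let $a_m$ denote the number of subcollections of the common cycles whose lengths sum to $m$, with $a_m=0$ for $m<0$. Each $a_m\geqslant 0$. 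Then a subcollection of the cycles of $\beta$ summing to a target value $T$ either avoids $(y_1\ldots y_t)$, contributing $a_T$, or contains it, contributing $a_{T-t}$. A subcollection of the cycles of $\pi$ summing to $T$ splits into four cases according to which of the two new cycles it uses, contributing $a_T+a_{T-s}+a_{T-(t-s)}+a_{T-t}$.

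The key step is then the elementary identity, applied with $T=r-1$ for $N_1$ and $T=r$ for $N_2$:
\[
|N_i(\pi)|-|N_i(\beta)|=a_{T-s}+a_{T-(t-s)}\geqslant 0,
\]
since the two terms present for $\pi$ but absent for $\beta$ are exactly the contributions of the two cases in which only one of the two short cycles is chosen. Equivalently, writing $P(x)=\prod(1+x^{|C|})$ over the common cycles, this is the observation that $(1+x^s)(1+x^{t-s})-(1+x^t)=x^s+x^{t-s}$ has non-negative coefficients, so multiplying by the non-negative series $P(x)$ and reading off the coefficient of $x^{r-1}$ (resp.\ $x^r$) yields the claim.

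I do not expect a genuine obstacle here; the only point requiring care is justifying the reduction in the first paragraph, namely that $Tc(\sigma)\subset Tc(\pi)$ forces $\sigma$ to be an exact union of whole cycles of $\pi$, with no cycle of $\pi$ used partially. This follows because $<a,\sigma(a)>\in Tc(\pi)$ means $\sigma$ agrees with $\pi$ on every point it moves, so $Ts(\sigma)$ is $\pi$-invariant and hence a union of $\pi$-cycles; this is already implicit in Lemmas~\ref{lm8} and~\ref{lm10} and can be cited. A secondary bookkeeping point is to confirm $s\geqslant 2$ and $t-s\geqslant 2$ from the hypotheses, so that the two pieces really are cycles rather than fixed points, which is what guarantees the cycle-length multiset of $\pi$ is the common multiset together with $\{s,t-s\}$ as claimed.
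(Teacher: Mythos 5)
Your proposal is correct and is essentially the paper's own argument in quantitative form: the paper proves the inequality via the injection $\sigma \mapsto \sigma$ (when $\sigma$ omits the cycle $(y_1\,\ldots\,y_t)$) or $\sigma \mapsto \sigma'$ (replacing $(y_1\,\ldots\,y_t)$ by the two cycles $(y_1\,\ldots\,y_s)$ and $(y_{s+1}\,\ldots\,y_t)$), which is exactly the correspondence underlying your identity $|N_i(\pi)|-|N_i(\beta)|=a_{T-s}+a_{T-(t-s)}$, resting on the same structural fact that $Tc(\sigma)\subset Tc(\pi)$ forces $\sigma$ to be a union of whole cycles of $\pi$. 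Your generating-function phrasing additionally exhibits the exact surplus (the subcollections using exactly one of the two short cycles), but the decomposition and the cycle-splitting substitution coincide with the paper's proof.
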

\begin{proof}
Let $\sigma\in N_1(\beta)$. Since $Tc(\sigma)\subset Tc(\beta)$ then $\beta$ has all the cycles of $\sigma$. If $\sigma$ does not contain the cycle $(y_1\,y_2\ldots y_t)$ then $\pi$ has all the cycles of $\sigma$ and $\sigma\in N_1(\pi)$. Otherwise, there exists a permutation $\sigma'$ that has all the cycles of $\sigma$ but $(y_1\,\ldots\,y_t)$ with two additional cycles $(y_1\,\ldots\,y_s)$ and $(y_{s+1}\,\ldots\,y_t)$, $|Tc(\sigma')|=r-1$, and $Tc(\sigma')\subset Tc(\pi)$. Thus, $\sigma'\in N_1(\pi)$, and we have $|N_1(\pi)|\geqslant |N_1(\beta)|$. Similarly, we also prove that $|N_2(\pi)|\geqslant |N_2(\beta)|$. 
\end{proof}

\begin{lemma}
\label{lm12}
Let $\pi,\sigma\in\mathrm{Sym}_n$  with $w_H(\pi)=2r-1\leqslant n$ and $w_H(\sigma)=r$ for some $r\geqslant 2$. Then $\sigma\in N_3(\pi)$ if only if $\pi$ has all the cycles of $\sigma$ but a cycle $(y_1\,\ldots\,y_t)$ in $\sigma$, and it has a cycle $(y_1\,\ldots\,y_t\,y_{t+1}\,\ldots\,y_{t+s})$ for some $s\geqslant 1$ and $t\geqslant 2$.
\end{lemma}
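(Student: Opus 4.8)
The plan is to prove both implications of the equivalence by tracking the single pair of $Tc(\sigma)$ that fails to lie in $Tc(\pi)$ and reading off the resulting cycle structure. Recall that $\sigma\in N_3(\pi)$ means precisely $|Tc(\sigma)|=r$ and $|Tc(\sigma)\cap Tc(\pi)|=r-1$, so exactly one pair $\langle i_0,\sigma(i_0)\rangle\in Tc(\sigma)$ is missing from $Tc(\pi)$; equivalently $\sigma(i)=\pi(i)$ for every $i\in Ts(\sigma)\setminus\{i_0\}$, while $\sigma(i_0)\neq\pi(i_0)$.

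For the forward direction I would first isolate the $\sigma$-cycle through $i_0$. Writing it as $(y_1\,y_2\,\ldots\,y_t)$ with $\sigma(y_j)=y_{j+1}$ and $i_0=y_t$ (so the broken pair is $\langle y_t,y_1\rangle$), this cycle is nontrivial, hence $t\geqslant 2$. Since $y_1,\ldots,y_{t-1}$ differ from $i_0$, the agreement $\sigma=\pi$ there forces $\pi(y_1)=y_2,\ldots,\pi(y_{t-1})=y_t$, so $\pi$ threads $y_1\to y_2\to\cdots\to y_t$. The crucial step is to locate $\pi(y_t)$: because $\langle y_t,y_1\rangle\notin Tc(\pi)$ we have $\pi(y_t)\neq y_1$, and injectivity of $\pi$ (the elements $y_2,\ldots,y_t$ are already the images of $y_1,\ldots,y_{t-1}$, and $\pi(y_t)=y_t$ would collide with $\pi(y_{t-1})=y_t$) rules out every element of $\{y_1,\ldots,y_t\}$. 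Hence $y_{t+1}:=\pi(y_t)$ is new, and following the $\pi$-orbit of $y_1$ forward it must eventually close back on $y_1$, producing a single $\pi$-cycle $(y_1\,\ldots\,y_t\,y_{t+1}\,\ldots\,y_{t+s})$ with $s\geqslant 1$. Every other cycle of $\sigma$ avoids $i_0$, so $\sigma$ and $\pi$ agree on it and it is also a cycle of $\pi$; this yields exactly the stated condition that $\pi$ carries all cycles of $\sigma$ except $(y_1\,\ldots\,y_t)$, together with the enlarged cycle.

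For the converse I would simply compute the two pair-sets from the assumed cycle structure. The shared cycles contribute identical pairs to $Tc(\sigma)$ and $Tc(\pi)$, namely $r-t$ of them, since $w_H(\sigma)=r$ and the distinguished cycle has length $t$; the distinguished $\sigma$-cycle $(y_1\,\ldots\,y_t)$ and the enlarged $\pi$-cycle $(y_1\,\ldots\,y_{t+s})$ share the $t-1$ links $\langle y_1,y_2\rangle,\ldots,\langle y_{t-1},y_t\rangle$, while $\langle y_t,y_1\rangle\in Tc(\sigma)$ is absent from $Tc(\pi)$ (there $y_t$ maps to $y_{t+1}\neq y_1$) and the links $\langle y_t,y_{t+1}\rangle,\ldots,\langle y_{t+s},y_1\rangle$ are absent from $Tc(\sigma)$. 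Thus $|Tc(\sigma)\cap Tc(\pi)|=(r-t)+(t-1)=r-1$ with $|Tc(\sigma)|=r$, which is exactly $\sigma\in N_3(\pi)$.

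The main obstacle I anticipate is the forward direction's orbit argument: showing that the single redirected value $\pi(y_t)$ forces $\pi$ to merge the whole $\sigma$-cycle $(y_1\,\ldots\,y_t)$ with fresh elements into one longer cycle, rather than splitting or redistributing the remaining structure. This rests entirely on the bijectivity of $\pi$ together with the fact that only one pair of $Tc(\sigma)$ is perturbed; once that is settled, the bookkeeping of pairs — and the disjointness of the enlarged cycle from the shared cycles, which is automatic since all are cycles of the single permutation $\pi$ — is routine. A minor point worth stating carefully is that $y_{t+1},\ldots,y_{t+s}$ are necessarily fixed points of $\sigma$, which together with $w_H(\pi)=2r-1$ in fact pins down $s=r-1$.
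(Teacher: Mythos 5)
Your proof of the equivalence itself is correct and is essentially the paper's own argument in expanded form: both directions isolate the unique pair of $Tc(\sigma)$ absent from $Tc(\pi)$ and read off the resulting cycle structure, with your injectivity/orbit argument supplying details that the paper's (much terser) proof leaves implicit.

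However, your closing remark --- that $w_H(\pi)=2r-1$ together with $y_{t+1},\ldots,y_{t+s}$ being fixed points of $\sigma$ ``pins down $s=r-1$'' --- is false, and you should drop it. It would hold only if every cycle of $\pi$ were either a cycle of $\sigma$ or the enlarged cycle, but neither the definition of $N_3(\pi)$ nor the lemma forces that: $\pi$ may have further cycles disjoint from all cycles of $\sigma$. Concretely, for $r=4$ take $\pi=(1\,2)(3\,4)(5\,6\,7)$, so $w_H(\pi)=7=2r-1$, and $\sigma=(1\,2)(5\,6)$, so $w_H(\sigma)=4=r$. Then $Tc(\sigma)\cap Tc(\pi)=\{<1,2>,<2,1>,<5,6>\}$ has size $r-1$, hence $\sigma\in N_3(\pi)$, with distinguished cycle $(5\,6)$ and enlarged cycle $(5\,6\,7)$, i.e.\ $t=2$ and $s=1\neq r-1=3$; here the cycle $(3\,4)$ of $\pi$ meets no cycle of $\sigma$. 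This is not a pedantic point: in the application of this lemma (Lemma~\ref{lm13}) the permutations $\sigma\in N_3(I^{\ell})$ contain only a subset of the $2$-cycles of $I^{\ell}$, which is exactly the configuration your remark rules out, so carrying it forward would derail the subsequent counting. Since the remark is not used in your proof of the equivalence, the lemma itself stands as you proved it.
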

\begin{proof}
If $\sigma\in N_3(\pi)$ then $|Tc(\sigma)\cap Tc(\pi)|=r-1$ with $|Tc(\sigma)|=r$. Therefore, there exists only one element $y_t$ in $Tc(\sigma)$ such that $\sigma(y_t)\neq \pi(y_t)$. So, $\pi$ has all the cycles of $\sigma$ but a cycle $(y_1\,\ldots\,y_t)$ in $\sigma$, and also it has another cycle $(y_1\,\ldots\,y_t\,y_{t+1}\,\ldots\,y_{t+s})$ for some $s\geqslant 1$ and $t\geqslant 2$.

On the other hand, if $\pi$ has all the cycles of $\sigma$ but a cycle $(y_1\,\ldots\,y_t)$ in $\sigma$, and also has a cycle $(y_1\,\ldots\,y_t\,y_{t+1}\,\ldots\,y_{t+s})$ for some $s\geqslant 1$ and $t\geqslant 2$, then $|Tc(\sigma)\cap Tc(\pi)|=r-1$ with $|Tc(\sigma)|=r$. Thus, $\sigma\in N_3(\pi)$.
\end{proof}

Now we consider permutations with special properties. 

Let $\mathrm{Sym}_n(2r-1,\ell)$ be a set of permutations $\pi$ with $w_H(\pi)=2r-1$ having a cycle $(y_1\,\ldots\,y_{\ell})$ of the maximum length $\ell\geqslant 3$. We consider a permutation $I^{\ell}\in \mathrm{Sym}_n(2r-1,\ell)$ with nearly all cycles of length two but a cycle of length $\ell$ and a cycle of length three if $\ell$ is even. For example, if $I^4\in \mathrm{Sym}_n(9,4)$ then $disc(I^4)=[1^{n-9}2^13^14^1]$, and if $I^5\in \mathrm{Sym}_n(9,5)$ then $disc(I^5)=[1^{n-9}2^25^1]$. 

In what follows below, let $\binom{0}{0}=1$, $\binom{p}{q}=0$ if $p<q$, $p<0$, or $q<0$.

\begin{lemma}
\label{lm13} For any $n\geqslant 2r-1$ and $t\geqslant 2$, the following statements hold.  
\begin{enumerate}
\item For any $\ell=2s-1$ there exists $I^{\ell} \in \mathrm{Sym}_n(2r-1,\ell)$ with all cycles of length two but a cycle of odd length $\ell$ such that 
    \begin{equation}
    |N_3(I^{\ell})|=
    \begin{cases} 
    \ell\cdot\sum_{i=1}^{s-1} \binom{r-s}{t-i},~$r=2t$;\\
    \ell\cdot\sum_{i=1}^{s-2} \binom{r-s}{t-i},~$r=2t+1$,\\
    \end{cases}\nonumber 
    \end{equation}
    where $2\leqslant s\leqslant r$. 
\item For any $\ell=2s$ there exists $I^{\ell} \in \mathrm{Sym}_n(2r-1,\ell)$ with all cycles of length two but a cycle of length three and a cycle of even length $\ell$ such that
    \begin{equation}
    |N_3(I^{\ell})|=
    \begin{cases} 
    2\cdot(\ell+3)\cdot\binom{r-s-2}{t-1}+\ell\cdot\sum_{i=1}^{s-2}\binom{r-s-1}{t-i-1},~\text{$r=2t$};\\
    \ell\cdot\sum_{i=1}^{s-1}\binom{r-s-1}{t-i} ,~\text{$r=2t+1$};\\
    \end{cases} \nonumber 
    \end{equation}
where $2\leqslant s\leqslant r-2$.
\end{enumerate}
\end{lemma}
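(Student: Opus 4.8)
The plan is to count $|N_3(I^{\ell})|$ directly from the characterization of $N_3$ supplied by Lemma~\ref{lm12}. That lemma tells us that a permutation $\sigma$ with $w_H(\sigma)=r$ lies in $N_3(I^{\ell})$ exactly when $\sigma$ is obtained from $I^{\ell}$ by keeping some of its cycles unchanged and replacing exactly one cycle $(y_1\,\ldots\,y_t\,y_{t+1}\,\ldots\,y_{t+s})$ of $I^{\ell}$ by a shorter prefix cycle $(y_1\,\ldots\,y_t)$ with $t\geqslant 2$, the lengths of the retained and truncated cycles summing to $r$. Moreover Lemma~\ref{lm12} guarantees that the truncated cycle is unique for each $\sigma$, so summing over the choice of truncated cycle introduces no overcounting. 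The enumeration thus reduces to: choose which cycle of $I^{\ell}$ to truncate, choose the prefix, and choose which of the remaining cycles to keep. Two elementary facts drive it. First, only a cycle of length at least three can be truncated, since a prefix must have length $t\geqslant 2$ and be strictly shorter; in particular every transposition of $I^{\ell}$ is unusable as the truncated cycle. Second, a cycle of length $m$ has exactly $m$ prefixes of each admissible length $u$ with $2\leqslant u\leqslant m-1$ (one per starting element), each contributing weight $u$.

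First I would dispose of the odd case $\ell=2s-1$, where $I^{\ell}$ consists of $r-s$ transpositions and a single $\ell$-cycle. Here the $\ell$-cycle is the only truncatable cycle, so after fixing a prefix of length $u$ (with $\ell$ choices, $2\leqslant u\leqslant \ell-1=2s-2$) the remaining weight $r-u$ must be filled using transpositions only; this forces $r-u$ to be even and selects $(r-u)/2$ of the $r-s$ transpositions, giving $\ell\binom{r-s}{(r-u)/2}$. Writing $u=2j$ when $r=2t$ and $u=2j+1$ when $r=2t+1$, and tracking the parity-admissible range of $u$, converts this into $\ell\sum_{j=1}^{s-1}\binom{r-s}{t-j}$ and $\ell\sum_{j=1}^{s-2}\binom{r-s}{t-j}$ respectively, which are the claimed expressions.

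The substantial work is the even case $\ell=2s$, where $I^{\ell}$ has $p=r-s-2$ transpositions, one $3$-cycle and one $\ell$-cycle, so there are now two truncatable cycles. I would split into subcase~A (truncate the $3$-cycle, necessarily to length $2$, with $3$ prefixes) and subcase~B (truncate the $\ell$-cycle to length $u$, $2\leqslant u\leqslant 2s-1$, with $\ell$ prefixes). In each subcase the remaining weight is filled from the transpositions together with the \emph{other} non-transposition cycle, which may or may not be kept; this yes/no choice produces two binomial terms whose admissibility is governed by the parity of the weight still to be placed. Collecting everything gives, for $r=2t$,
\begin{equation}
3\!\left[\binom{p}{t-1}+\binom{p}{t-s-1}\right]+2s\sum_{j=1}^{s-1}\!\left[\binom{p}{t-j}+\binom{p}{t-j-2}\right],\nonumber
\end{equation}
and an analogous expression for $r=2t+1$ in which subcase~A drops out entirely, because $r-2$ is then odd and cannot be realised from transpositions and an even-length cycle.

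The final step is to reconcile these raw sums with the stated closed forms, and this is where I expect the only real friction. For $r=2t+1$ the two parity contributions of subcase~B pair up termwise via Pascal's rule $\binom{p}{t-j}+\binom{p}{t-j-1}=\binom{p+1}{t-j}$ with $p+1=r-s-1$, collapsing directly to $\ell\sum_{j=1}^{s-1}\binom{r-s-1}{t-j}$. For $r=2t$ one must first notice the binomial symmetry $\binom{p}{t-s-1}=\binom{p}{t-1}$, valid because $(t-1)+(t-s-1)=p=2t-s-2$; this folds subcase~A into $6\binom{p}{t-1}$, and then a single further application of Pascal's rule together with a careful comparison of the summation index ranges yields $2(\ell+3)\binom{p}{t-1}+\ell\sum_{i=1}^{s-2}\binom{r-s-1}{t-i-1}$. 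The delicate points throughout are keeping the parity bookkeeping consistent across the two fill-options, getting the summation limits exactly right, and checking the degenerate small-$s$ cases in which some sums are empty or a binomial coefficient vanishes under the stated conventions $\binom{0}{0}=1$ and $\binom{p}{q}=0$ for $q<0$.
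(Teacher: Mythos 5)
Your proposal is correct and follows essentially the same route as the paper: both apply Lemma~\ref{lm12} to reduce the count to choosing which cycle of $I^{\ell}$ is truncated, which of its cyclic prefixes replaces it, and which of the remaining cycles are retained, with parity determining the admissible prefix lengths and the number of transpositions kept (your two subcases with a yes/no choice on the other long cycle are exactly the paper's four scenarios in the even case). The only difference is that you explicitly carry out the final algebraic reconciliation with the stated closed forms --- the symmetry $\binom{p}{t-s-1}=\binom{p}{t-1}$ with $p=r-s-2$ and Pascal's rule --- which the paper asserts without detail, so your write-up is if anything slightly more complete on that step.
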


\begin{proof} 1. Let $\ell=2s-1$, where $2\leqslant s\leqslant 2t$ and $t\geqslant 2$, and a permutation $I^{\ell} \in \mathrm{Sym}_n(2r-1,\ell)$ has all cycles of length two but a cycle $(y_1\,\ldots\,y_{\ell})$ of odd length $\ell$. If $r=2t$ then $I^{\ell}$ has $r-s$ cycles of length two and a cycle of length $2s-1$, and if $r=2t+1$ then $I^{\ell}$ has $r-s$ cycles of length two and a cycle of length $2s-1$. Assume $\sigma\in N_3(I^{\ell})$ then by Lemma~\ref{lm12} the permutation $\sigma$ has all cycles of length two of $I^{\ell}$ but a cycle $(y_{j+1}\,\ldots\,y_{j+a})$, $1\leqslant j\leqslant {\ell}$, for some $a$ such that $2\leqslant a\leqslant \ell-1=2s-2$. 

If $s=2$ there does not exist $\sigma\in N_3(I^{\ell})$, i.~e. $|N_3(I^{\ell})|=0$ in this case.

If $r=2t$ then since $\sigma\in N_3(I^{\ell})$ we have $w_H(\sigma)=r$ which means $a$ must be even. Let $a=2i$ and $1\leqslant i\leqslant s-1$. It follows  $\sigma$ has $t-i$ cycles of length two from $r-s$ cycles of length two of $I^{\ell}$ and a cycle of length $2i$ among cycles of type $(y_1\,\ldots\,y_{\ell})$ of $I^{\ell}$. By Lemma~\ref{lm12}, the cycle of length $2i$ is a continuous substring of $(y_1\,\ldots\,y_{\ell})$, and hence in this case the number of permutations $\sigma\in N_3(I^{\ell})$ under consideration is $\ell\cdot\binom{r-s}{t-i}$ for some $1\leqslant i\leqslant s-1$. 

Let $r=2t+1$ then $a$ must be odd with taking values $3,5,\ldots,2s-3$. Let $a=2i+1$ and $1\leqslant i\leqslant s-2$. It follows that $\sigma$ has $t-i$ cycles of length two from $r-s$ cycles of length two of $I^{\ell}$ and a cycle of length $2i+1$ among cycles of type $(y_1\,\ldots\,y_{\ell})$ of $I^{\ell}$. By Lemma~\ref{lm12}, the cycle of length $2i+1$ is a continuous substring of $(y_1\,\ldots\,y_{\ell})$, and hence in this case the total number of sought permutations $\sigma\in N_3(I^{\ell})$ is $\ell\cdot\binom{r-s}{t-i}$ for some $1\leqslant i\leqslant s-2$. 

2. Let $\ell=2s$, where $s\geqslant 2$, and a permutation $I^{\ell} \in \mathrm{Sym}_n(2r-1,\ell)$ has all cycles of length two but a cycle $(z_1\,z_2\,z_3)$ of length three and a cycle $(y_1\,\ldots\,y_{\ell})$ of even length $\ell$. For any $t\geqslant 2$, if $r=2t$ then $I^{\ell}$ has $r-s-2$ cycles of length two, one cycle of length three and one cycle of length $2s$, and if $r=2t+1$ then $I^{\ell}$ has $r-s-2$ cycles of length two, one cycle of length three and one cycle of length $2s$. 

Assume $\sigma\in N_3(I^{\ell})$. By Lemma~\ref{lm12}, there are the following four scenarios:
\begin{enumerate}
    \item $\sigma$ has all cycles of length two of $I^{\ell}$ but a cycle $(y_1\,\ldots\,y_a)$ for some $a$, where $2\leqslant a\leqslant \ell-1=2s-1$;
    \item $\sigma$ has all cycles of length two of $I^{\ell}$ but a cycle $(z_1\,z_2\,z_3)$ and a cycle $(y_1\,\ldots\,y_a)$ for some $a$, where $2\leqslant a\leqslant 2s-1$;
    \item $\sigma$ has all cycles of length two of $I^{\ell}$ but a cycle $(z_{a}\,z_{a+1})$ for some $a$, where $1\leqslant a\leqslant 3$; 
    \item $\sigma$ has all cycles of length two of $I^{\ell}$ but a cycle $(y_1\,y_2\,\ldots\,y_{\ell})$ and a cycle $(z_{a}\,z_{a+1})$ for some $a$, $1\leqslant a\leqslant 3$. 
\end{enumerate}

If $r=2t$ then since $\sigma\in N_3(I^{\ell})$ we have $w_H(\sigma)=r$. By the first scenario, $\sigma$ has $t-i$ cycles of length two taken from $r-s-2$ cycles of length two of $I^{\ell}$ and a cycle of length $a=2i$ obtained from $(y_1\,\ldots\,y_{\ell})$ in $I^{\ell}$, $1\leqslant i\leqslant s-1$. So, in this case the number of permutations $\sigma\in N_3(I^{\ell})$ in the first scenario is given by $\ell\cdot\binom{r-s-2}{t-i}$, where $1\leqslant i\leqslant s-1$. Furthermore, the number of desired permutations in the second scenario is given by $\ell\cdot\binom{r-s-2}{t-i-2}$, where $1\leqslant i\leqslant s-1$, and the corresponding numbers of permutations in the third and the fourth scenarios are given by $3\cdot\binom{r-s-2}{t-1}$ and $3\cdot\binom{r-s-2}{t-s-1}$, respectively, which finally gives us the formula for $|N_3(I^{\ell})|$ in the case $r=2t$ and $2\leqslant s\leqslant r-2$. 

If $r=2t+1$ then the formula for $|N_3(I^{\ell})|$ is obtained by similar arguments.
\end{proof}

Lemmas~\ref{lm11} and~\ref{lm13} give the following result.

\begin{lemma}
\label{lm14}  For any $n\geqslant 2r-1$ and $t\geqslant 2$, we have:
\begin{align*}
|B_r&(I^{\ell})\cap B_r(I_n)|=|N_1(I^{\ell})|+|N_2(I^{\ell})|+|N_3(I^{\ell})|=\\
&=
\begin{cases} 
2\cdot\binom{r-s}{t}+\ell\cdot\sum_{i=1}^{s-1} \binom{r-s}{t-i},~\text{$r=2t$};\\
2\cdot\binom{r-s}{t}+\ell\cdot\sum_{i=1}^{s-2} \binom{r-s}{t-i},~\text{$r=2t+1$},\\
\end{cases}
\end{align*}

where $\ell=2s-1$ and $2\leqslant s\leqslant r$;
\begin{align*}
&=
\begin{cases} 
2\cdot\binom{r-s}{t}+2\cdot(\ell+1)\cdot\binom{r-s-2}{t-1}+\ell\cdot\sum_{i=1}^{s-2}\binom{r-s-1}{t-i-1},~\text{$r=2t$},\\
2\cdot\binom{r-s-1}{t}+\ell\cdot\sum_{i=1}^{s-1}\binom{r-s-1}{t-i},~\text{$r=2t+1$},\\
\end{cases}
\end{align*}

where $\ell=2s$ and $2\leqslant s\leqslant r-2$.
\end{lemma}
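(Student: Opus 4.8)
The plan is to combine the decomposition $|B_r(I^{\ell})\cap B_r(I_n)|=|N_1(I^{\ell})|+|N_2(I^{\ell})|+|N_3(I^{\ell})|$, recorded just after Lemma~\ref{lm10}, with the explicit value of $|N_3(I^{\ell})|$ supplied by Lemma~\ref{lm13}. The only remaining work is to evaluate $|N_1(I^{\ell})|$ and $|N_2(I^{\ell})|$ and then to fold all three contributions into the stated closed form. First I would recall from Lemma~\ref{lm10} that a permutation $\sigma\in N_1(I^{\ell})$ (respectively $\sigma\in N_2(I^{\ell})$) satisfies $Tc(\sigma)\subset Tc(I^{\ell})$, so each cycle of $\sigma$ is an entire cycle of $I^{\ell}$; hence $\sigma$ is obtained by choosing a sub-collection of the cycles of $I^{\ell}$ whose lengths sum to $r-1$ (respectively to $r$). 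Thus $|N_1(I^{\ell})|$ and $|N_2(I^{\ell})|$ are pure subset-selection counts governed by the cycle-length multiset of $I^{\ell}$.

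Next I would carry out these counts using the known structure of $I^{\ell}$. For odd $\ell=2s-1$ the permutation $I^{\ell}$ consists of $r-s$ transpositions and a single cycle of odd length $2s-1$; since every transposition contributes an even amount to the total length, the parity of the target forces a unique decision about whether the long cycle is used, and a short computation with the symmetry $\binom{m}{k}=\binom{m}{m-k}$ shows that both $|N_1(I^{\ell})|$ and $|N_2(I^{\ell})|$ equal $\binom{r-s}{t}$, producing the term $2\binom{r-s}{t}$. For even $\ell=2s$ the permutation $I^{\ell}$ has $r-s-2$ transpositions, one cycle of length three and one cycle of even length $2s$; here the three-cycle is the only odd-length block, so matching the parity of $r-1$ or of $r$ pins down whether the three-cycle and the long cycle are selected, and each of $|N_1(I^{\ell})|$, $|N_2(I^{\ell})|$ becomes a sum of two binomial coefficients with upper index $r-s-2$.

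I would then add $|N_1(I^{\ell})|+|N_2(I^{\ell})|$ to the expression for $|N_3(I^{\ell})|$ from Lemma~\ref{lm13} and simplify, repeatedly applying Pascal's rule $\binom{m}{k}=\binom{m-1}{k}+\binom{m-1}{k-1}$ together with the symmetry $\binom{m}{k}=\binom{m}{m-k}$ to merge terms of the form $\binom{r-s-2}{\cdot}$ into $\binom{r-s-1}{\cdot}$ and $\binom{r-s}{\cdot}$, treating the two parities $r=2t$ and $r=2t+1$ separately throughout. Finally, Lemma~\ref{lm11} is invoked to confirm that $I^{\ell}$, being the maximally split representative of $\mathrm{Sym}_n(2r-1,\ell)$ (all blocks of length two apart from the forced long cycle and, when $\ell$ is even, one three-cycle), is the member of that class for which $|N_1|+|N_2|$ is largest, so that the value computed here is the one relevant to the lower bound developed afterwards.

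The step I expect to be the main obstacle is the final binomial simplification in the even-$\ell$, $r=2t$ case. There the four selection scenarios behind Lemma~\ref{lm13}, together with the two-term expressions for $|N_1|$ and $|N_2|$, produce a collection of coefficients with shifted upper and lower indices that must collapse exactly to $2\binom{r-s}{t}+2(\ell+1)\binom{r-s-2}{t-1}+\ell\sum_{i=1}^{s-2}\binom{r-s-1}{t-i-1}$; the parity constraints (whole transpositions contribute only even totals, so the odd blocks must be included or excluded to hit the target) are what make the bookkeeping delicate, and keeping the boundary cases $s=2$ and small $t$ consistent with the convention $\binom{p}{q}=0$ for $p<q$ is where an error is most likely to slip in.
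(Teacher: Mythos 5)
Your proposal is correct and follows essentially the same route as the paper: decompose $|B_r(I^{\ell})\cap B_r(I_n)|$ as $|N_1|+|N_2|+|N_3|$, count $|N_1(I^{\ell})|$ and $|N_2(I^{\ell})|$ as parity-constrained selections of whole cycles of $I^{\ell}$ (giving $\binom{r-s}{t-s}=\binom{r-s}{t}$ and $\binom{r-s}{t}$ in the odd-$\ell$ case, and two-term sums in the even-$\ell$ case), import $|N_3(I^{\ell})|$ from Lemma~\ref{lm13}, and merge via Pascal's rule and symmetry. In fact you spell out the even-$\ell$ bookkeeping that the paper dispatches with ``the other cases are obtained by a similar way''; your closing appeal to Lemma~\ref{lm11} is extraneous for this lemma (it matters for the subsequent lower bound, not for computing the value at the specific permutation $I^{\ell}$), but it does no harm.
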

\begin{proof} If $\ell=2s-1$ and $r=2t$ then $I^{\ell}$ has $r-s$ cycles of length two and a cycle of length $2s-1$. Let $\sigma\in N_1(I^{\ell})$ then $w_H(\sigma)=r-1$. Thus, $\sigma$ is presented by $t-s$ cycles of length two which are in the list of $r-s$ cycles of length two of $I^{\ell}$, and it also has a cycle of length $2s-1$ from $I^{\ell}$. Hence, we have: 
\begin{equation}
|N_1(I^{\ell})|=\binom{r-s}{t-s}.\nonumber
\end{equation}
Now let $\sigma\in N_2(I^{\ell})$ with $w_H(\sigma)=r=2t$ then $\sigma$ has $t$ cycles of length two from the list of those cycles in $I^{\ell}$ which immediately gives us: 
\begin{equation}
|N_2(I^{\ell})|=\binom{r-s}{t}.\nonumber
\end{equation}
Therefore, by Lemma~\ref{lm13} we get the resulting formula in the case when $\ell=2s-1$ and $r=2t$. 

The other cases of $\ell$ and $r$ are obtained by a similar way.
\end{proof}

Now we are ready to prove the following statement. 

\begin{lemma}
\label{lm15}  For any $n\geqslant 2r-1$ and $t\geqslant 2$, we have:
\begin{equation}\label{eq8}
\max\limits_{3\leqslant \ell\leqslant 2r-1} |B_r(I^{\ell})\cap B_r(I_n)|=2\cdot\binom{2t-3}{t}+ \ell^*\cdot\binom{2t-2}{t-1},
\end{equation} 
where the maximum is reached for $\ell^*=5$ if $r=2t$ and for $\ell^*=7$ if $r=2t+1$.
\end{lemma}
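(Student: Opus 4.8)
The plan is to start from the closed forms for $|B_r(I^{\ell})\cap B_r(I_n)|$ furnished by Lemma~\ref{lm14} and to treat each of the four cases (parity of $\ell$ against parity of $r$) as an explicit function of the single parameter $s$, where $\ell=2s-1$ or $\ell=2s$. Writing $g(s):=|B_r(I^{\ell})\cap B_r(I_n)|$ in each case, the whole statement reduces to a discrete optimisation: I must show that among all admissible $s$ the maximum of $g$ is attained at $s=3$ (giving $\ell^*=5$) when $r=2t$, and at $s=4$ (giving $\ell^*=7$) when $r=2t+1$, and that the resulting value equals the right-hand side of~(\ref{eq8}).

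First I would pin down the target value. For the odd case $\ell=2s-1$, Pascal's identity $\binom{r-s}{t-1}+\binom{r-s}{t-2}=\binom{r-s+1}{t-1}$ collapses the two-term sum at the conjectured optimum: at $s=3$ with $r=2t$ one obtains $2\binom{2t-3}{t}+5\binom{2t-2}{t-1}$, and at $s=4$ with $r=2t+1$ one obtains $2\binom{2t-3}{t}+7\binom{2t-2}{t-1}$; both coincide with the claimed expression $2\binom{2t-3}{t}+\ell^{*}\binom{2t-2}{t-1}$. This identifies $\ell^*$ and reduces the problem to proving optimality.

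The core of the argument is to establish that, in the odd case, $g(s)$ is unimodal in $s$ with its peak at the stated value. The plan is to compare consecutive values $g(s)$ and $g(s+1)$: after rewriting each $\binom{r-s}{\,\cdot\,}$ in the smaller base $\binom{r-s-1}{\,\cdot\,}$ via Pascal's rule, the difference $g(s)-g(s+1)$ simplifies to a short combination of binomial coefficients with a common top argument, whose sign is then decided by the elementary ratio identity $\binom{n}{k}/\binom{n}{k-1}=(n-k+1)/k$. A couple of boundary comparisons ($g(2)\le g(3)$, and additionally $g(3)\le g(4)$ when $r=2t+1$) show that $g$ increases up to $\ell^{*}$, while the generic comparison $g(s)\ge g(s+1)$ for $s$ beyond the peak shows it decreases afterwards; together these give the maximum over odd $\ell$.

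Finally I would rule out the even case. Using the even-$\ell$ formulas of Lemma~\ref{lm14}, each such value is bounded above by substituting Pascal's identity to express it in the same base $\binom{2t-2}{\,\cdot\,}$ and then comparing term by term with the odd maximum; in essence, replacing an even longest cycle $\ell=2s$ by a nearby odd one never decreases the count, so the even values are dominated by $g(\ell^{*})$. Collecting the two parities then yields~(\ref{eq8}). I expect the main obstacle to be the generic step $g(s)\ge g(s+1)$, which must hold uniformly in both $s$ and $t$: although each individual comparison is a clean binomial-ratio inequality, controlling the sum $\sum_i\binom{r-s}{t-i}$ as $s$ grows—so that the shrinking binomials always outweigh the growing multiplier $\ell=2s-1$—is where the delicate bookkeeping lies.
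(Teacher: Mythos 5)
Your proposal is correct and follows essentially the same route as the paper's own proof: both start from the closed forms of Lemma~\ref{lm14}, identify the candidate optimum via Pascal's identity, establish the maximum over odd $\ell$ by boundary comparisons at small $s$ together with a consecutive-difference argument showing $g(s)\geqslant g(s+1)$ beyond the peak, and dispose of even $\ell$ by showing each even value is dominated by an adjacent odd one. There is nothing substantive to distinguish the two arguments.
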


\begin{proof}
Let $r=2t$. If $\ell=2s-1$ then by Lemma~\ref{lm14} we have:  
\begin{equation}\label{eq9}
|B_r(I^{\ell})\cap B_r(I_n)|=2\cdot\binom{2t-s}{t}+\ell\cdot\sum_{i=1}^{s-1} \binom{2t-s}{t-i}
\end{equation} 
for any $2\leqslant s\leqslant 2t$. If $\ell=3$ and $\ell=5$ then~(\ref{eq9}) gives $2\binom{2t-2}{t}+3\binom{2t-2}{t-1}$ and $2\binom{2t-3}{t}+5\left(\binom{2t-3}{t-1}+\binom{2t-3}{t-2}\right)$, respectively. Since
\begin{equation}\label{eq10}
\binom{n}{k}=\binom{n-1}{k}+\binom{n-1}{k-1}
\end{equation}
we have: 
\begin{align*}
2\binom{2t-2}{t}+3\binom{2t-2}{t-1}&=2\binom{2t-3}{t}+2\binom{2t-3}{t-1}+3\binom{2t-3}{t-1}+3\binom{2t-3}{t-2}\\
&\leqslant 2\binom{2t-3}{t}+5\binom{2t-3}{t-1}+5\binom{2t-3}{t-2}.
\end{align*}
For any $s\geqslant 3$, it is easily shown the following equation: 
$$|B_r(I^{2s-1})\cap B_r(I_n)|-|B_r(I^{2s+1})\cap B_r(I_n)|=$$
$$=(2s-3)\cdot\sum_{i=2}^{s-1} \binom{2t-s-1}{t-i}-2\cdot\binom{2t-s-1}{t-1}.$$ 

Since $s\geqslant 3$ then 
$$\binom{2t-s-1}{t-2}\geqslant \binom{2t-s-1}{t-1}$$
as $(2t-s-1)/2\leqslant (t-2)$. Therefore, we have:
$$|B_r(I^{2s-1})\cap B_r(I_n)|\geqslant |B_r(I^{2s+1})\cap B_r(I_n)|$$ 
and finally we get: 
\begin{equation}
|B_r(I^5)\cap B_r(I_n)|=\max\limits_{3\leqslant \ell\leqslant 2t-1}|B_r(I^{\ell})\cap B_r(I_n)|,\nonumber    
\end{equation}
when $r=2t$ and $\ell=2s-1$.

If $\ell=2s$ then by Lemma~\ref{lm14} the following holds:  
$$|B_r(I^{\ell})\cap B_r(I_n)|=2\cdot\binom{2t-s}{t}+2\cdot(\ell+1)\cdot\binom{2t-s-2}{t-1}+\ell\cdot\sum_{i=1}^{s-2}\binom{2t-s-1}{t-i-1}$$
for any $2\leqslant s\leqslant 2t-2$. 

Let us compare $|B_r(I^{2s})\cap B_r(I_n)|$ and $|B_r(I^{2s+1})\cap B_r(I_n)|$ as follows. By Lemma~\ref{lm14}, we have:
$$|B_r(I^{2s+1})\cap B_r(I_n)|=2\cdot\binom{2t-s-1}{t}+(\ell+1)\cdot\sum_{i=1}^s\binom{2t-s-1}{t-i},$$
and for any $s\geqslant 2$ we get: 
$$|B_r(I^{2s+1})\cap B_r(I_n)|-|B_r(I^{2s})\cap B_r(I_n)|=$$
$$=2\ell\cdot\binom{2t-s-2}{t-2}+\sum_{i=2}^{s-1}\binom{2t-s-1}{t-i}-2\cdot\binom{2t-s-2}{t-1}.$$
Since $s\geqslant 2$ then $(2t-s-2)/2\leqslant (t-2)$ which gives $\binom{2t-s-2}{t-2}\geqslant \binom{2t-s-2}{t-1}$. It follows that we have the following inequality: 
$$|B_r(I^{2s+1})\cap B_r(I_n)|\geqslant |B_r(I^{2s})\cap B_r(I_n)|$$  
and finally we get: 
\begin{equation}
|B_r(I^5)\cap B_r(I_n)|=\max\limits_{3\leqslant \ell\leqslant 2t-1}|B_r(I^{\ell})\cap B_r(I_n)|,\nonumber    
\end{equation}
when $r=2t$ and $\ell=2s$.

The case $r=2t+1$ is proved using similar arguments. If $\ell=2s-1$ then by Lemma~\ref{lm14} we have: 
\begin{equation} \label{eq11}
|B_r(I^l)\cap B_r(I_n)|= 2\cdot\binom{2t+1-s}{t}+\ell\cdot\sum_{i=1}^{s-2} \binom{2t+1-s}{t-i}
\end{equation}
for any $2\leqslant s\leqslant 2t+1$. If $\ell=3$ or $\ell=5$ then~(\ref{eq11}) gives $2\binom{2t-1}{t}$ or $2\binom{2t-2}{t}+5\binom{2t-2}{t-1}$, respectively, and we have: 
\begin{align*}
2\binom{2t-1}{t}=2\binom{2t-2}{t}+2\binom{2t-2}{t-1}\leqslant 2\binom{2t-2}{t}+5\binom{2t-2}{t-1}.
\end{align*}
Moreover, if $\ell=7$ then~(\ref{eq11}) is equal to $2\binom{2t-3}{t}+7\binom{2t-2}{t-1}$ and we have the following inequality:
$$2\binom{2t-2}{t}+5\binom{2t-2}{t-1}\leqslant 2\binom{2t-3}{t}+7\binom{2t-2}{t-1}.$$
For any $s\geqslant 4$, we get:
$$|B_r(I^{2s-1})\cap B_r(I_n)|-|B_r(I^{2s+1})\cap B_r(I_n)|=$$
$$=(\ell-2)\cdot\sum_{i=2}^{s-1} \binom{2t-s}{t-i}-2\cdot\binom{2t-s}{t-1},$$ 
and since $(2t-s)/2\leqslant (t-2)$ we have $\binom{2t-s}{t-2}\geqslant \binom{2t-s}{t-1}$. This means that 
$$|B_r(I^{2s-1})\cap B_r(I_n)|\geqslant |B_r(I^{2s+1})\cap B_r(I_n)|$$ for any $s\geqslant 4$, and we have: 
\begin{equation}
|B_r(I^7)\cap B_r(I_n)|=\max\limits_{3\leqslant \ell\leqslant 2t-1}|B_r(I^{\ell})\cap B_r(I_n)|,\nonumber    
\end{equation}
when $r=2t+1$ and $\ell=2s-1$.

If $\ell=2s$ then by Lemma~\ref{lm14} we have: 
$$|B_r(I^{\ell})\cap B_r(I_n)|=2\cdot\binom{2t-s}{t}+\ell\cdot\sum_{i=1}^{s-1}\binom{2t-s}{t-i}$$
for any $2\leqslant s\leqslant 2t-1$. 

Now we compare $|B_r(I^{2s})\cap B_r(I_n)|$ and $|B_r(I^{2s+1})\cap B_r(I_n)|$ as follows. By~(\ref{eq10}) and Lemma~\ref{lm14}, we have:
$$|B_r(I^{2s+1})\cap B_r(I_n)|=2\cdot\binom{2t-s}{t}+(\ell+1)\cdot\sum_{i=1}^{s-1}\binom{2t-s}{t-i},$$
and for any $s\geqslant 2$ we get: 
$$|B_r(I^{2s+1})\cap B_r(I_n)|-|B_r(I^{2s})\cap B_r(I_n)|=\sum_{i=1}^{s-1}\binom{2t-s}{t-i}.$$ 
Thus, it follows that 
$$|B_r(I^{2s+1})\cap B_r(I_n)|\geqslant |B_r(I^{2s})\cap B_r(I_n)|$$ 
and finally we have: 
\begin{equation}
|B_r(I^7)\cap B_r(I_n)|=\max\limits_{3\leqslant \ell\leqslant 2t-1}|B_r(I^{\ell})\cap B_r(I_n)|,\nonumber    
\end{equation}
when $r=2t+1$ and $\ell=2s$, which completes the proof.
\end{proof}

Let us note that if $t=2$ then by Lemma~\ref{lm15} we have $|B_4(I^5)\cap B_4(I_n)|=10$, where $I^5=(1\,2)(3\,4\,5\,6\,7)$, and we have $|B_5(I^7)\cap B_5(I_n)|=14$, where $I^7=(1\,2)(3\,4\,5\,6\,7\,8\,9)$. In what follows below we give two examples showing that $N(n,7,4)=|B_4(I^5)\cap B_4(I_n)|$ and $N(n,9,5)>|B_5(I^7)\cap B_5(I_n)|$. 

\begin{example}
Let $d=7$ and $\pi\in \mathrm{Sym}_n$ with $w_H(\pi)=7$ then $disc(\pi)\in \{[7^1],[5^12^1],[4^13^1],[3^12^2]$\}. By Lemmas~\ref{lm7} and~\ref{lm12}, we have $|B_4(\pi)\cap B_4(I_n)|\in\{7,10,2,8\}$ (in corresponding order). Thus, $I(n,7,4)=10$. By Theorem~\ref{thm1}, we have $I(n,8,4)=\binom{4}{2}=6$, hence, $N(n,7,4)=|B_4(I^5)\cap B_4(I_n)|=10$.    
\end{example}

\begin{example}
Let $d=9$ and $\pi\in \mathrm{Sym}_n$ with $w_H(\pi)=9$ then $$disc(\pi)\in\{[9^1],[7^12^1],[6^13^1],[5^14^1],[5^12^2],[4^13^12^1],[3^3],[3^12^3]\}$$
and
$|B_5(\pi)\cap B_5(I_n)|\in\{9,14,12,2,12,10,18,6\}$  (in corresponding order). Therefore, $I(n,9,5)=18$. By Theorem~\ref{thm1}, $I(n,10,5)=2\binom{2}{1}=4$, hence, $N(n,9,5)=18$. Thus, the value $|B_5(I^7)\cap B_5(I_n)|=14$ is suboptimal.
\end{example}

In the same way it is easily to check that $N(n,5,3)=5$ and $N(n,3,2)=3$. 

Let us use the discussion above to prove the lower bound on $N(n,2r-1,r)$ for any $n\geqslant 2r-1$.

\begin{theorem}
\label{thm2}
For any $n\geqslant 2r-1$ and $t\geqslant 2$, we have:
\begin{equation}\label{eq12}
I(n,2r-1,r)\geqslant 2\cdot\binom{2t-3}{t}+ \ell^*\cdot\binom{2t-2}{t-1},
\end{equation} 
where $\ell^*=5$ if $r=2t$ and $\ell^*=7$ if $r=2t+1$.
Moreover, for any $n\geqslant 2r$, we have:
\begin{equation} \label{eq13}
N(n,2r-1,r)=I(n,2r-1,r)\geqslant N(n,2r,r).
\end{equation}
Furthermore, we have
\begin{equation} \label{eq14}
N(n,2r-1,r)\geqslant 2\cdot\binom{2t-3}{t}+ \ell^*\cdot\binom{2t-2}{t-1}
\end{equation} 
over the same conditions for $\ell^*$. 
\end{theorem}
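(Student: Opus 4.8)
The plan is to derive all three inequalities from the single computation already packaged in Lemma~\ref{lm15}, together with the triangle inequality and Theorem~\ref{thm1}. For~(\ref{eq12}) I would simply observe that each $I^{\ell}$ is a genuine permutation of Hamming weight $w_H(I^{\ell})=2r-1$, so it is one of the competitors in the maximum defining $I(n,2r-1,r)$. Hence $I(n,2r-1,r)\geqslant |B_r(I^{\ell})\cap B_r(I_n)|$ for every admissible $\ell$ with $3\leqslant \ell\leqslant 2r-1$, and taking the maximum over $\ell$ and invoking Lemma~\ref{lm15} yields exactly the right-hand side of~(\ref{eq12}). No further work is needed at this step.

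For~(\ref{eq13}) the first move is to pin down which distances actually contribute to $N$. By~(\ref{eq3}) we have $N(n,2r-1,r)=\max_{k\geqslant 2r-1}I(n,k,r)$, and the triangle inequality forces $I(n,k,r)=0$ whenever $k>2r$: if $d(\pi,\tau)=k>2r$ then $d(\pi,\sigma)+d(\sigma,\tau)\geqslant k>2r$ for every $\sigma$, so $\sigma$ cannot lie in both balls of radius $r$. The same observation gives $N(n,2r,r)=I(n,2r,r)$. Consequently $N(n,2r-1,r)=\max\{I(n,2r-1,r),\,I(n,2r,r)\}$, and it remains to show the maximum is attained at $k=2r-1$, that is $I(n,2r-1,r)\geqslant I(n,2r,r)=N(n,2r,r)$. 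Feeding the lower bound~(\ref{eq12}) on the left and the closed forms of Theorem~\ref{thm1} on the right, this reduces to two elementary estimates: the case $r=2t+1$ is immediate since $7\binom{2t-2}{t-1}\geqslant 2\binom{2t-2}{t-1}$, while the case $r=2t$ requires the inequality $2\binom{2t-3}{t}+5\binom{2t-2}{t-1}\geqslant \binom{2t}{t}$.

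I expect this last inequality to be the only place demanding care. The plan is to expand $\binom{2t}{t}$ by two applications of Pascal's rule~(\ref{eq10}) down to binomials with upper index $2t-3$, do the same to the left-hand terms, and cancel. Using the symmetries $\binom{2t-3}{t-1}=\binom{2t-3}{t-2}$ and $\binom{2t-3}{t-3}=\binom{2t-3}{t}$, the whole inequality collapses to $2\binom{2t-3}{t-1}+2\binom{2t-3}{t-2}\geqslant 0$, which is trivially true; this is routine but slightly fiddly bookkeeping, so I would carry it out explicitly rather than leave it to the reader. Once $I(n,2r-1,r)\geqslant N(n,2r,r)$ is established, I conclude $N(n,2r-1,r)=I(n,2r-1,r)\geqslant N(n,2r,r)$, which is~(\ref{eq13}). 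Finally,~(\ref{eq14}) is immediate: for any $n\geqslant 2r-1$ we have $N(n,2r-1,r)\geqslant I(n,2r-1,r)$ directly from~(\ref{eq3}), and combining this with~(\ref{eq12}) gives the stated bound; for $n\geqslant 2r$ it also follows at once from~(\ref{eq13}) and~(\ref{eq12}).
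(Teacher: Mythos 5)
Your proposal is correct and follows essentially the same route as the paper: establish~(\ref{eq12}) from Lemma~\ref{lm15}, reduce~(\ref{eq13}) to comparing~(\ref{eq12}) with the exact values of Theorem~\ref{thm1}, using that $I(n,k,r)=0$ for $k>2r$, and deduce~(\ref{eq14}) at the end. Your Pascal-rule verification of $2\binom{2t-3}{t}+5\binom{2t-2}{t-1}\geqslant\binom{2t}{t}$ is the same computation the paper performs, just expanded one level further, and it checks out.
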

\begin{proof}
By Lemma~\ref{lm15}, if $r=2t$ then we have:  
\begin{equation}
I(n,2r-1,r)\geqslant |B_r(I^5)\cap B_r(I_n)|=2\binom{2t-3}{t}+5\binom{2t-2}{t-1},\nonumber
\end{equation}
 where $I^5=(1\,2)(3\,4\,5\,6\,7)$, and if $r=2t+1$ then we get:
\begin{equation}
I(n,2r-1,r)\geqslant |B_r(I^7)\cap B_r(I_n)|=2\binom{2t-3}{t}+7\binom{2t-2}{t-1},
\nonumber
\end{equation}
where $I^7=(1\,2)(3\,4\,5\,6\,7\,8\,9)$, that all together lead to~(\ref{eq12}). 

Now let us prove~(\ref{eq13}) and~(\ref{eq14}). By Theorem~\ref{thm1}, if $r=2t$ then $N(n,2r,r)=I(n,2r,r)=\binom{2t}{t}$ such that: 
\begin{align*}
I(n,2r,r)=\binom{2t}{t}&=2\binom{2t-3}{t}+2\binom{2t-3}{t-1}+2\binom{2t-2}{t-1}\\
&\leqslant 2\binom{2t-3}{t}+5\binom{2t-2}{t-1}\leqslant  I(n,2r-1,r), 
\end{align*}
and if $r=2t+1$ then $N(n,2r,r)=I(n,2r,r)=2\binom{2t-2}{t-1}$ where:
\begin{align*}
  I(n,2r,r)=2\binom{2t-2}{t-1}< 2\binom{2t-3}{t}+7\binom{2t-2}{t-1}\leqslant I(n,2r-1,r),
\end{align*}
which gives~(\ref{eq13}). Moreover, since $I(n,d,r)=0$ for any $d\geqslant 2r+1$ then $I(n,2r-1,r)\geqslant I(n,d,r)$ for any $d\geqslant 2r$. Thus, $ N(n,2r-1,r)=I(n,2r-1,r)\geqslant N(n,2r,r)$ which gives~(\ref{eq14}) for any $t\geqslant 2$ and completes the proof. 
\end{proof}

\section{Further discussions}

It is not difficult to show that the lower bound in~(\ref{eq14}) is attained for $r=6,7,8,9$. Indeed, by Lemma~\ref{lm7} and Theorem~\ref{thm2} we have:
$$N(n,11,6)=2\binom{3}{3}+5\binom{4}{2}=32, \hspace{5mm} N(n,13,7)=2\binom{3}{3}+7\binom{4}{2}=44,$$ 
$$N(n,15,8)=2\binom{5}{4}+5\binom{6}{3}=110, \hspace{5mm} N(n,17,9)=2\binom{5}{4}+7\binom{6}{3}=150.$$

The following natural question arises here. 

\begin{open} Is it true that for any $t\geqslant 5$ the following holds:
\begin{equation*}
N(n,2r-1,r)=2\cdot\binom{2t-3}{t}+ \ell^*\cdot\binom{2t-2}{t-1},
\end{equation*} 
where $\ell^*=5$ if $r=2t$ and $\ell^*=7$ if $r=2t+1$.
\end{open}

\section*{Acknowledgements}
The work of Xiang Wang is supported by the National Natural Science Foundation of China (Grant No. 12001134), and the work of Elena~V.~Konstantinova is supported by the Mathematical Center in Akademgorodok, under agreement No. 075-15-2022-281 with the Ministry of Science and High Education of the Russian Federation.


\begin{thebibliography}{}

\bibitem{CKLL97}
G.~Cohen, I.~Honkala, S.~Litsyn, A,~Lobstein, \emph{Covering Codes}, Ser. North-Holland Mathematical Library, Vol.~54. Amsterdam, The Netherlands: North-Holland, pp.~16-17, 1997.

\bibitem{KG90}
M.~Kendall, J.~D.~Gibbons, \emph{Rank Correlation Methods}. New York: Oxford Univ. Press, 1990.

\bibitem{K1}
E.~Konstantinova, Reconstruction of permutations distorted by single reversal errors, \emph{Discrete Applied Math.}, \textbf{155} (2007) 2426--2434.\par

\bibitem{K2}
E.~Konstantinova, V.~Levenshtein, and J.~Siemons, Reconstruction of permutations distorted by single transposition errors, 2007, \texttt{http://arxiv.org/abs/math/0702191v1}.\par

\bibitem{K3}
E.~Konstantinova, On reconstruction of signed permutations distorted by reversal errors, \emph{Discrete Mathematics}, \textbf{308} (2008) 974--984.\par

\bibitem{K4}
E.~V.~Konstantinova, Vertex reconstruction in Cayley graphs, \emph{Discrete Mathematics}, \textbf{309} (2009) 548--559. 

\bibitem{Knu94}
D.~E.~Knuth, \emph{Sorting and Searching}, Vol. 3 of The Art of Computer Programming, Addison--Wesley, Reading, Massachusetts, second
edition, 1998.

\bibitem{L1}
V.~I.~Levenshtein, Efficient reconstruction of sequences, \emph{IEEE Trans. on Inform. Theory}, \textbf{47}(1) (2001) 2--22.\par

\bibitem{L2}
V.~I.~Levenshtein, Efficient reconstruction of sequences from their subsequences or supersequences, \emph{Journal of Combin. Theory, Ser. A}, \textbf{93}(2) (2001) 310--332.\par

\bibitem{L4}
V.~I.~Levenshtein, J.~Siemons, Error graphs and the reconstruction of elements in groups, \emph{Journal of Combin. Theory, Ser. A}, \textbf{116} (2009) 795--815.\par

\bibitem{Stanley11}
R.~Stanley, \emph{Enumerative Combinatorics}, 2nd ed., Cambridge Studies in Advanced Mathematics, Cambridge: Cambridge University Press, 2011.

\bibitem{Yaakobi}
E.~Yaakobi, M.~Schwartz, M.~Langberg, and J.~Bruck, Sequence reconstruction for Grassmann graphs and permutations, In: \emph{Proc. Int. Symp. Inform. Theory}, 2013, 874--878.\par

\bibitem{Wang1}
X.~Wang, Reconstruction of permutations distorted by single Kendall $\tau$-errors, \emph{Cryptogr. Commun.}, \textbf{15} (2023) 131--144. 

\end{thebibliography}
\end{document}